\documentclass[onecolumn,english]{svjour3}
\pdfoutput=1
\usepackage{geometry}
\geometry{verbose,tmargin=1in,bmargin=1in,lmargin=1in,rmargin=1in}
\usepackage[T1]{fontenc}
\usepackage[latin9]{inputenc}
\usepackage{textcomp}
\usepackage{refstyle}

\usepackage{amsmath}
\usepackage{amssymb}
\usepackage{graphicx}
\usepackage{refstyle}
\usepackage{esint}

\makeatletter
\smartqed  
\numberwithin{equation}{section}
\numberwithin{theorem}{section}
\numberwithin{proposition}{section}
\makeatother

\usepackage{babel}
\begin{document}

\title{Extremal Trajectories and Maxwell Strata in Sub-Riemannian Problem on Group of Motions of Pseudo Euclidean Plane}

\author{Yasir Awais Butt, Yuri L. Sachkov \thanks{Work of the second author is supported by 
Grant of the Russian Federation for the State Support of Researches
(Agreement  No~14.B25.31.0029).}, Aamer Iqbal Bhatti}
\institute{Yasir Awais Butt \at Department of Electronic Engineering\\
Muhammad Ali Jinnah University\\
Islamabad, Pakistan\\
Tel.: +92-51-111878787\\
\email{yasir\_awais2000@yahoo.com}\\
\and Yuri L. Sachkov  \at Program Systems Institute\\
Pereslavl-Zalessky, Russia\\
\email{sachkov@sys.botik.ru}\\
\and Aamer Iqbal Bhatti\at Department of Electronic Engineering\\
Muhammad Ali Jinnah University\\
Islamabad, Pakistan\\
Tel.: +92-51-111878787\\
\email{aib@jinnah.edu.pk}}

\date{Received: date / Accepted: date}
\titlerunning{Sub-Riemannian Problem on Group SH(2)}
\maketitle

\begin{abstract}
We consider the sub-Riemannian length minimization problem on the group of motions of pseudo Euclidean plane that form the special hyperbolic group $\mathrm{SH}(2)$. The system comprises of left invariant vector fields with 2-dimensional linear control input and energy cost functional. We apply the Pontryagin Maximum Principle to obtain the extremal control input and the sub-Riemannian geodesics. A change of coordinates transforms the vertical subsystem of the normal Hamiltonian system into the mathematical pendulum. In suitable elliptic coordinates the vertical and the horizontal subsystems are integrated such that the resulting extremal trajectories are parametrized by the Jacobi elliptic functions. Qualitative analysis reveals that the projections of normal extremal trajectories on the $xy$-plane have cusps and inflection points. The vertical subsystem being a generalized pendulum admits reflection symmetries that are used to obtain a characterization of the Maxwell strata.

\keywords{Sub-Riemannian Geometry, Special Hyperbolic Group SH(2), Extremal
Trajectories, Parametrization, Elliptic Coordinates, Jacobi Elliptic
Functions, Maxwell Strata} 

\subclass{49J15, 93B27, 93C10, 53C17, 22E30}
\end{abstract}

\section{Introduction\label{sec:Introduction}}

Sub-Riemannian geometry deals with the study of smooth manifolds $M$ that are endowed with a vector distribution $\Delta$ and a smoothly varying positive definite quadratic form. The distribution $\Delta$ is a subbundle of the tangent bundle $TM$ and the quadratic form allows measuring distance between any two points in $M$ \cite{Strichartz},\cite{Montgomery_Book},\cite{Agrchev_Barilari_Boscain_SR}. Other names that appear in literature for sub-Riemannian Geometry are Carnot-Carathéodory geometry \cite{Gromov}, Non-holonomic Riemannian geometry \cite{Gershkovich} and Singular Riemannian geometry \cite{R_W_Brocket}. The aim of defining and solving the sub-Riemannian problem is to find
the optimal curves between two given points on the sub-Riemannian manifold $M$ such that the sub-Riemannian length between the points is minimized \cite{Montgomery_Book},\cite{Agrchev_Barilari_Boscain_SR}. Sub-Riemannian problems occur widely in nature and technology \cite{Montgomery_Book},\cite{SR-Examples} and have therefore been extensively studied via the geometric control methods on various Lie groups such as the Heisenberg group \cite{Gershkovich},\cite{R_W_Brocket},\cite{Heisenberg_Group}, $\mathrm{S^{3}}$, SL(2), SU(2) \cite{SO(3)}, SE(2) \cite{max_sre},\cite{cut_sre1},\cite{cut_sre2}, the Engel group \cite{Ardentov}, the Solvable group $\mathrm{SOLV}^{-}$ \cite{Mazhitova}, and also in \cite{Sachkov_Dido_Symmetries},\cite{Sachkov-Dido-Max}. Among the physical systems that describe sub-Riemannian problems and on which Geometric control methods have been successfully applied one can find parking of cars, rolling bodies on a plane without sliding, motion planning and control of robots, satellites, vision, quantum mechanical systems and even finance \cite{Montgomery_Book},\cite{SR-Examples}. 

We consider the sub-Riemannian problem on the group of motions of the Pseudo-Euclidean plane which is a subspace of the pseudo Euclidean space \cite{Wong}. The motions of pseudo Euclidean plane described in Section \ref{subsec:3.1} form a 3-dimensional Lie group known as the special hyperbolic group SH(2) \cite{Ja.Vilenkin}. The optimal control problem comprises a system
of left invariant vector fields with 2-dimensional linear control input and energy cost functional. The group SH(2) gives one of the Thurston's 3-dimensional geometries \cite{Thurston} and the study of sub-Riemannian problem on SH(2) bears significance in the program of complete study of all the left-invariant sub-Riemannian problems on 3-dimensional Lie groups following the classification in terms of the basic differential invariants \cite{agrachev_barilari}. 

Notice that equivalent sub-Riemannian problem was considered in \cite{Mazhitova} on the Lie group $\mathrm{SOLV}^{-}$. However, the parametrization of sub-Riemannian geodesics obtained in \cite{Mazhitova} is far from being complete. This paper seeks the rigorous scheme of analysis developed in \cite{max_sre},\cite{Ardentov},\cite{Sachkov_Dido_Symmetries} for parametrization and qualitative analysis of the extremal trajectories. The corresponding results are insightful, in simpler form owing primarily
to the use of simpler elliptic coordinates and allow further analysis on global and local optimality of geodesics. The paper is organized as follows. We begin with a short overview of sub-Riemannian geometry in Section 2. We present detailed account of problem statement in Section 3 covering  a description of the Lie group $\mathrm{SH(2)}$ and the sub-Riemannian problem on $\mathrm{SH(2)}$. In Section 4 we apply the Pontryagin Maximum Principle to obtain the normal Hamiltonian system. In Section 5 we present the computation of the Hamiltonian flow in the elliptic coordinates and the qualitative analysis of the projections of the extremal trajectories on the $xy$-plane. Section 6 pertains to the reflection symmetries of the vertical and horizontal subsystem. In Section 7 we utilize results from reflection symmetries of the Hamiltonian system to state the generalized conditions for the Maxwell strata. Sections 8 and 9 pertain to the future work and the conclusion respectively.

\section{Sub-Riemannian Geometry}
\subsection{Sub-Riemannian Manifold}

A Sub-Riemannian space/manifold is a generalization of a Riemannian manifold. It comprises of a manifold $M$ of dimension $n$, a smooth vector distribution $\Delta$ with constant rank $m\leq n$, and a Riemannian metric $g$ defined on $\Delta$. It is denoted as a triple $(M,\Delta,g)$. The distribution $\Delta$ on $M$ is a smooth linear subbundle of the tangent bundle $TM$. Intuitively, the motion on a sub-Riemannian manifold is restricted along the paths that are tangent to the horizontal subspaces or the admissible directions of motion are constrained to the horizontal subspaces $\Delta_{q}$, $q\in M$ \cite{agrachev_sachkov}, \cite{sachkov_lectures}. 
\subsection{Sub-Riemannian Distance}
A horizontal curve in a sub-Riemannian manifold $(M,\Delta,g)$ is a Lipschitzian curve $\gamma:I\subset\mathbb{R}\rightarrow M$ such that $\dot{\gamma}(t)\in\Delta_{\gamma(t)}$
for almost all $t\in I$. The length of $\gamma$ is given as:
\[
length(\gamma)=\intop_{I}\sqrt{g_{\gamma(t)}(\dot{\gamma}(t),\dot{\gamma}(t))}dt,
\]
where $g_{\gamma(t)}$ is the inner product in $\Delta_{\gamma(t)}$ \cite{Agrchev_Barilari_Boscain_SR}. The sub-Riemannian distance between two points $p,\, q\in M$ is the length of the shortest curve joining $p$ to $q$:
\[
d(p;q)=inf\left\{ length(\gamma):\begin{array}{c}
\gamma\ is\ horizontal\ curve\\
\gamma\ joins\ p\ to\ q
\end{array}\right\} .
\]
\subsection{Sub-Riemannian Problem}

The problem of finding horizontal curves $\gamma$ from the initial state $q_{0}$ to the final state $q_{1}$ with the shortest sub-Riemannian length is called the sub-Riemannian problem \cite{Agrchev_Barilari_Boscain_SR}, \cite{Montgomery_Sub-Riemannian}.
Suppose that there exists a set of smooth vector fields $f_{1},\ldots,f_{m}$ on $M$ whose values $\forall q\in M$ form an orthonormal frame of the Euclidean space $(\Delta_{q},g_{q})$. Sub-Riemannian minimizers are the solutions of the following optimal control problem on $M$:
\begin{eqnarray*}
\dot{q} & = & \sum_{i=1}^{m}u_{i}(t)f_{i}(q),\qquad q\in M,\qquad(u_{1},\cdots,u_{m})\in\mathbb{R}^{m},\\
q(0) & = & q_{0},\qquad q(t_{1})=q_{1},\\
l & = & \int_{0}^{t_{1}}\sqrt{\sum_{i=1}^{m}u_{i}^{2}}dt\to\min.
\end{eqnarray*}
If $(\Delta,g)$ is a left-invariant sub-Riemannian structure on a Lie group, then it has a global orthonormal frame of left-invariant vector fields. Moreover, one can take the initial point $q_{0}=Id$, the identity element of the Lie group.
\section{Problem Statement}
\subsection{The Group SH(2) of Motions of Pseudo Euclidean Plane\label{subsec:3.1}}

The following presentation is motivated from \cite{Ja.Vilenkin} and is presented here for the sake of completeness. The group $\mathrm{SH(2)}$ can be represented by $3 \times 3$ matrices:
\[
M=\mathrm{SH}(2)=\left\{ \left(\begin{array}{ccc}
\cosh z & \sinh z & x\\
\sinh z & \cosh z & y\\
0 & 0 & 1
\end{array}\right)\mid x,y,z\in\mathbb{R}\right\} .
\]
The Lie group $\mathrm{SH(2)}$ comprises of three basis one-parameter subgroups given as:
\[
w_{1}(t)=\left(\begin{array}{ccc}
\cosh t & \sinh t & 0\\
\sinh t & \cosh t & 0\\
0 & 0 & 1
\end{array}\right),\quad w_{2}(t)=\left(\begin{array}{ccc}
1 & 0 & t\\
0 & 1 & 0\\
0 & 0 & 1
\end{array}\right),\quad w_{3}(t)=\left(\begin{array}{ccc}
1 & 0 & 0\\
0 & 1 & t\\
0 & 0 & 1
\end{array}\right),
\]
where $t\in\mathbb{R}$. The basis for the Lie algebra $\mathrm{sh(2)}=T_{Id}\mathrm{SH(2)}$ are the tangent matrices $A_{i}=\frac{dw_{i}(t)}{dt}\mid_{t=0}$ to the subgroups of the Lie group $\mathrm{SH(2)}$. $A_{i}$ are given as:
\[
A_{1}=\left(\begin{array}{ccc}
0 & 1 & 0\\
1 & 0 & 0\\
0 & 0 & 0
\end{array}\right),\quad A_{2}=\left(\begin{array}{ccc}
0 & 0 & 1\\
0 & 0 & 0\\
0 & 0 & 0
\end{array}\right),\quad A_{3}=\left(\begin{array}{ccc}
0 & 0 & 0\\
0 & 0 & 1\\
0 & 0 & 0
\end{array}\right).
\]
The Lie algebra is thus:
\[
\mathcal{L}=T_{Id}M=\mathrm{sh}(2)=\mathrm{span}\left\{ A_{1},A_{2},A_{3}\right\} .
\]
The multiplication rule for $\mathcal{L}$ is $[A,B]=AB-BA$. Therefore, the Lie bracket for $\mathrm{sh}(2)$ is given as $[A_{1},A_{2}]=A_{3}$,
$[A_{1},A_{3}]=A_{2}$ and $[A_{2},A_{3}]=0$. It is trivial to see that the Lie group $\mathrm{SH(2)}$ represents isometries of pseudo Euclidean plane. 

\subsection{Sub-Riemannian Problem on SH(2)}

Consider the following sub-Riemannian problem on $\mathrm{SH(2)}$:
\begin{eqnarray}
\dot{q} & = & u_{1}f_{1}(q)+u_{2}f_{2}(q),\qquad q\in M=\mathrm{SH(2)},\qquad  (u_{1},u_{2})\in\mathbb{R}^{2},\label{eq:3-1}\\
q(0) & = & Id,\qquad q(t_{1})=q_{1},\label{eq:3-2}\\
l & = & \int_{0}^{t_{1}}\sqrt{u_{1}^{2}+u_{2}^{2}}\, dt\to\min,\label{eq:3-3}\\
f_{1}(q) & = & qA_{2},\qquad f_{2}(q)=qA_{1}.\label{eq:3-4}
\end{eqnarray}
In terms of the classification of the left-invariant sub-Riemannian structures on 3D Lie groups \cite{agrachev_barilari}, the canonical frame on $M$ is given as:
\[
f_{1}(q),\ f_{2}(q),\ f_{0}(q)=qA_{3},
\]
\begin{equation*}
[f_{1},f_{0}]=0,\qquad[f_{2},f_{0}]=f_{1},\qquad[f_{2},f_{1}]=f_{0}.
\end{equation*}
By \cite{agrachev_barilari}, the sub-Riemannian structure:
\[
(M,\Delta,g),\qquad\Delta=span\{f_{1},f_{2}\},\qquad g(f_{i},f_{j})=\delta_{ij},
\]
is unique up to rescaling, left invariant contact sub-Riemannian structure on $\mathrm{SH(2)}$. Here $\delta_{ij}$ is the Kronecker delta. 
In the coordinates $(x,y,z)$, the basis vector field are given as: 
\[
f_{1}(q)=\cosh z\frac{\partial}{\partial x}+\sinh z\frac{\partial}{\partial y},
\]
and
\[
f_{2}(q)=\frac{\partial}{\partial z}.
\]
Therefore (\ref{eq:3-1}) may be written as: 
\begin{equation}
\left(\begin{array}{c}
\dot{x}\\
\dot{y}\\
\dot{z}
\end{array}\right)=\left(\begin{array}{c}
\cosh z\\
\sinh z\\
0
\end{array}\right)u_{1}+\left(\begin{array}{c}
0\\
0\\
1
\end{array}\right)u_{2}.\label{eq:3-5}
\end{equation}
By the Cauchy-Schwarz inequality,
\[
(l(u))^{2}=\left(\intop_{0}^{t_{1}}\sqrt{u_{1}^{2}+u_{2}^{2}}dt\right)^{2}\leq t_{1}\intop_{0}^{t_{1}}(u_{1}^{2}+u_{2}^{2})dt,
\]
moreover, the inequality turns into an equality iff $u_{1}^{2}+u_{2}^{2}\equiv \textrm{constant}$. Thus the sub-Riemannian length functional minimization problem (\ref{eq:3-3}) is equivalent to the problem of minimizing the following energy functional with fixed $t_{1}$ \cite{sachkov_lectures}:
\begin{equation}
J=\frac{1}{2}\intop_{0}^{t_{1}}(u_{1}^{2}+u_{2}^{2})dt\rightarrow \min.\label{eq:3-6}
\end{equation}

\subsection{Controllability and Existence of Solutions}
System (\ref{eq:3-1}) has full rank because $f_{0}(q)$=$[f_{2}(q),f_{1}(q)]$=$qA_{3}$. The Lie algebra of the distribution $\mathcal{L}_{q}\Delta$ is given as:
\[
\mathcal{L}_{q}\Delta=span\{f_{1}(q),f_{2}(q),-f_{0}(q)\}=T_{q}\mathrm{SH(2)}\qquad\forall q\in \mathrm{SH(2)}.
\]
Hence by Rashevsky-Chow's theorem, system (\ref{eq:3-1}) is completely controllable \cite{Ravchevsky},\cite{Chow}. Existence of optimal trajectories for the optimal control problem (\ref{eq:3-1})-(\ref{eq:3-4}) follows from Filippov's theorem \cite{max_sre},\cite{agrachev_sachkov}.

\section{Pontryagin Maximum Principle for the Sub-Riemannian Problem on SH(2)}

We write the PMP form for (\ref{eq:3-1}),(\ref{eq:3-2}),(\ref{eq:3-6}) using the coordinate free approach described in \cite{agrachev_sachkov}. Consider the control dependent Hamiltonian for PMP corresponding to the vector fields $f_{1}(q)$ and $f_{2}(q)$:
\begin{equation}
h_{u}^{\nu}(\lambda)=\langle\lambda,u_1 f_{1}(q)+u_2 f_{2}(q)\rangle+\frac{\nu}{2}(u_{1}^{2}+u_{2}^{2}),\quad q=\pi(\lambda),\quad\lambda\in T^{*}M.\label{eq:4-1}
\end{equation}
Let $h_{i}(\lambda)=\langle\lambda,f_{i}(q)\rangle$ be the Hamiltonians corresponding to the basis vector fields $f_{i}$. Then (\ref{eq:4-1}) can be written as:
\begin{equation}
h_{u}^{\nu}(\lambda)=u_{1}h_{1}(\lambda)+u_{2}h_{2}(\lambda)+\frac{\nu}{2}(u_{1}^{2}+u_{2}^{2}),\quad u\in\mathbb{R}^{2},\quad \nu \in \mathbb{R},\quad \lambda \in T^{*}M.\label{eq:4-2}
\end{equation}
Now PMP for the optimal control problem is given by using Theorem 12.3 \cite{agrachev_sachkov} as:
\begin{theorem}
Let $\tilde{u}(t)$ be an optimal control and let $\tilde{q}(t)$ be the associated optimal trajectory for $t\in[0,t_{1}]$. Then, there exists a nontrivial pair:
\[
(\nu,\lambda_{t})\neq0,\qquad\nu\in\mathbb{R},\quad\lambda_{t}\in T_{\tilde{q}(t)}^{*}M,\quad\pi(\lambda_{t})=\tilde{q}(t),
\]
where $\nu\in\{-1,0\}$ is a number and $\lambda_{t}$ is a Lipschitzian curve for which the following conditions hold for almost all times $t\in[0,t_{1}]$:
\begin{eqnarray}
\dot{\lambda}_{t} & = & \overrightarrow{h}{}_{\tilde{u}(t)}^{\nu}(\lambda_{t}),\label{eq:4-3}\\
h_{\tilde{u}(t)}^{\nu}(\lambda_{t}) & = & \underset{u\in\mathbb{R}^{2}}{\max}h_{u}^{\nu}(\lambda_{t}),\label{eq:4-4}
\end{eqnarray}
where $\overrightarrow{h}{}_{\tilde{u}(t)}^{\nu}(\lambda_{t})$ is
the Hamiltonian vector field on $T^{*}M$ corresponding to the Hamiltonian function $h_{\tilde{u}(t)}^{\nu}$.
\end{theorem}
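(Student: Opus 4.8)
The plan is to obtain this statement as a direct specialization of the general Pontryagin Maximum Principle (Theorem~12.3 of \cite{agrachev_sachkov}) to the fixed-time optimal control problem (\ref{eq:3-1}), (\ref{eq:3-2}), (\ref{eq:3-6}). First I would cast the problem in the standard form to which the abstract theorem applies: a control-affine system $\dot q = u_1 f_1(q) + u_2 f_2(q)$ with smooth left-invariant vector fields, running cost $L(u)=\tfrac12(u_1^2+u_2^2)$, unconstrained control set $U=\mathbb{R}^2$, fixed terminal time $t_1$, and fixed endpoints $q(0)=Id$, $q(t_1)=q_1$. Existence of a minimizer has already been secured by Filippov's theorem in Section~3.3, so the standing data hypotheses of the abstract PMP are in place.

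Next I would introduce the lifted, control-dependent Hamiltonian on $T^{*}M$ exactly as in (\ref{eq:4-1})--(\ref{eq:4-2}), adjoining the cost through the scalar multiplier $\nu$, and then invoke the abstract theorem. This produces a nontrivial pair $(\nu,\lambda_t)$ with $\nu\leq 0$, an absolutely continuous covector curve $\lambda_t$ covering $\tilde q(t)$, satisfying the Hamiltonian ODE (\ref{eq:4-3}) together with the pointwise maximization (\ref{eq:4-4}) for almost every $t$. Since the conditions are invariant under multiplication of $(\nu,\lambda_t)$ by a positive constant, the sign normalization $\nu\leq 0$ lets me reduce to the two cases $\nu\in\{-1,0\}$, corresponding respectively to normal and abnormal extremals, while preserving the nontriviality $(\nu,\lambda_t)\neq 0$.

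I would then record the regularity and boundary data. Because both endpoints are prescribed, the transversality conditions are vacuous, so no endpoint constraint on $\lambda_0$ or $\lambda_{t_1}$ arises. The Lipschitz property of $\lambda_t$ follows once the right-hand side $\overrightarrow{h}{}_{\tilde u(t)}^{\nu}(\lambda_t)$ is seen to be measurable and locally bounded in $t$ and smooth in $\lambda$, so that $\lambda_t$ solves a Carath\'eodory ODE with the stated regularity. It is also worth checking the maximization over the unbounded set $U=\mathbb{R}^2$: for $\nu=-1$ the Hamiltonian $u_1 h_1+u_2 h_2-\tfrac12(u_1^2+u_2^2)$ is strictly concave in $u$ and attains its maximum, whereas for $\nu=0$ the linear dependence on $u$ forces $h_1=h_2=0$ along an abnormal extremal; either way the maximization condition is consistent with the unbounded control set.

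The content here is essentially verification rather than discovery: the theorem is a restatement of the abstract PMP for this particular frame. The only points requiring care are confirming that the standing assumptions of Theorem~12.3 genuinely hold (smoothness of the orthonormal frame $f_1,f_2$ and well-posedness of the maximization over $\mathbb{R}^2$), and that the normalization of $\nu$ does not destroy nontriviality. I therefore expect the main obstacle, such as it is, to be the bookkeeping needed to match the coordinate-free formulation (\ref{eq:4-2}) to the hypotheses of the cited theorem, rather than any substantive analytic difficulty.
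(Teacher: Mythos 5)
Your proposal takes essentially the same approach as the paper: the paper offers no independent proof, stating this theorem as a direct application of Theorem 12.3 of \cite{agrachev_sachkov} to the problem (\ref{eq:3-1}), (\ref{eq:3-2}), (\ref{eq:3-6}), which is exactly your strategy. Your extra verifications (strict concavity of $h_u^{-1}$ in $u$ guaranteeing attainment of the maximum over $\mathbb{R}^2$, the forcing of $h_1=h_2=0$ when $\nu=0$, and the homogeneity argument normalizing $\nu$ to $\{-1,0\}$) are correct and fill in the bookkeeping the paper leaves implicit.
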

\subsection{Extremal Trajectories}

Extremal trajectories comprise of the abnormal extremal trajectories for $\nu=0$ and normal extremal trajectories for $\nu=-1$. Since the structure is 3-dimensional contact, hence there are no nontrivial abnormal trajectories \cite{max_sre},\cite{Agrachev_Exp_Map}. We now consider the normal extremal trajectories. The Hamiltonian (\ref{eq:4-2}) in this case can be written as:
\begin{equation}
h_{u}^{-1}(\lambda)=u_{1}h_{1}(\lambda)+u_{2}h_{2}(\lambda)-\frac{1}{2}\left(u_{1}^{2}+u_{2}^{2}\right),\qquad u\in\mathbb{R}^{2}.\label{eq:4-5}
\end{equation}
Using the maximization condition of PMP, the trajectories of the normal Hamiltonian satisfy the equalities:
\[
\frac{\partial h_{u}^{-1}}{\partial u}=\left(\begin{array}{c}
h_{1}-u_{1}\\
h_{2}-u_{2}
\end{array}\right)=0 \implies u_{1}=h_{1},\quad u_{2}=h_{2}.
\]

Normal extremals are the trajectories of Hamiltonian system $\dot{\lambda}=\overrightarrow{H}(\lambda),\,\lambda\in T^{*}M$,
with the maximized Hamiltonian $H=\frac{1}{2}\left(h_{1}^{2}+h_{2}^{2}\right)\geq 0$.
Specifically, for the non-constant normal extremals $H>0$. Note that the Hamiltonian function in the normal case is homogeneous w.r.t. $h_{1}$, $h_{2}$ and therefore we consider its trajectories for the level surface \textbf{$H=\frac{1}{2}$}. The phase cylinder containing the initial covector $\lambda$ in this case is:
\begin{equation}
C=T_{q_{0}}^{*}M\cap\left\{ H(\lambda)=\frac{1}{2}\right\} =\left\{ \left(h_{1},h_{2},h_{0}\right)\in\mathbb{R}^{3}\quad \vert \quad h_{1}^{2}+h_{2}^{2}=1\right\} .\label{eq:4-6}
\end{equation}
Differentiating $h_{i}$ w.r.t. the Hamiltonian vector field we get:
\begin{eqnarray*}
\dot{h}_{1} & = & \left\{ H,h_{1}\right\} =\left\{ \frac{1}{2}\left(h_{1}^{2}+h_{2}^{2}\right),h_{1}\right\} =h_{2}\left\{ h_{2},h_{1}\right\} =h_{2}h_{0},\\
\dot{h}_{2} & = & \left\{ H,h_{2}\right\} =\left\{ \frac{1}{2}\left(h_{1}^{2}+h_{2}^{2}\right),h_{2}\right\} =h_{1}\left\{ h_{1},h_{2}\right\} =-h_{1}h_{0},\\
\dot{h}_{0} & = & \left\{ H,h_{0}\right\} =\left\{ \frac{1}{2}\left(h_{1}^{2}+h_{2}^{2}\right),h_{0}\right\} =h_{1}\left\{ h_{1},h_{0}\right\} +h_{2}\left\{ h_{2},h_{0}\right\} =h_{1}h_{2}.
\end{eqnarray*}
Hence, the complete Hamiltonian system in the normal case is given as:
\begin{equation}
\left(\begin{array}{c}
\dot{h}_{1}\\
\dot{h}_{2}\\
\dot{h}_{0}\\
\dot{x}\\
\dot{y}\\
\dot{z}
\end{array}\right)=\left(\begin{array}{c}
h_{2}h_{0}\\
-h_{1}h_{0}\\
h_{1}h_{2}\\
h_{1}\cosh z\\
h_{1}\sinh z\\
h_{2}
\end{array}\right).\label{eq:4-7}
\end{equation}
Introduce the following coordinate:
\begin{equation}
h_{1}=\cos\alpha,\quad h_{2}=\sin\alpha.\label{eq:4-8}
\end{equation}
Then, the vertical subsystem is given as:
\begin{equation*}
\dot{\alpha}  = -h_{0}, \qquad \dot{h}_{0}=\frac{1}{2}\sin2\alpha .
\end{equation*}
Let us introduce another change of coordinates:
\begin{equation}
\gamma=2\alpha\in2S^{1}=\mathbb{R}/4\pi\mathbb{Z},\quad c=-2h_{0}\in\mathbb{R}.\label{eq:4-9}
\end{equation}
Then,
\begin{equation}
\left(\begin{array}{c}
\dot{\gamma}\\
\dot{c}
\end{array}\right)=\left(\begin{array}{c}
c\\
-\sin\gamma
\end{array}\right),\label{eq:4-10}
\end{equation}
which represents the double covering of a mathematical pendulum. Hence the vertical subsystem of the normal Hamiltonian system (\ref{eq:4-7}) is a standard pendulum.

\section{Parametrization of Extremal Trajectories}

\subsection{Hamiltonian System}

The Hamiltonian system for the normal trajectories was given in (\ref{eq:4-7}). Under the transformations introduced in (\ref{eq:4-8})-(\ref{eq:4-9}), the horizontal subsystem can be written as:

\begin{equation}
\left(\begin{array}{c}
\dot{x}\\
\dot{y}\\
\dot{z}
\end{array}\right)=\left(\begin{array}{c}
h_{1}\cosh z\\
h_{1}\sinh z\\
h_{2}
\end{array}\right)=\left(\begin{array}{c}
\cos\frac{\gamma}{2}\cosh z\\
\cos\frac{\gamma}{2}\sinh z\\
\sin\frac{\gamma}{2}
\end{array}\right).\label{eq:5-1}
\end{equation}

\subsection{Decomposition of the Initial Cylinder}

Following the techniques employed in \cite{max_sre}, the decomposition
of phase cylinder $C$ proceeds as follows. The total energy integral $E$
 of the pendulum obtained in (\ref{eq:4-10}) is given as:
\begin{equation*}
E=\frac{c^{2}}{2}-\cos\gamma=2h_{0}^{2}-h_{1}^{2}+h_{2}^{2},\qquad E\in[-1,+\infty).
\end{equation*}
The total energy $E$ of the pendulum is a constant of motion for the Hamiltonian vector field $\overrightarrow{H}$. The initial cylinder (\ref{eq:4-6}) may be decomposed into the following subsets based upon the pendulum energy that correspond to various pendulum trajectories:
\begin{eqnarray*}
C & = & \bigcup_{i=1}^{5}C_{i},
\end{eqnarray*}
where,
\begin{eqnarray*}
C_{1} & = & \{\lambda\in C \, \vert \, E\in(-1,1)\},\\
C_{2} & = & \{\lambda\in C \, \vert \, E\in(1,\infty)\},\\
C_{3} & = & \{\lambda\in C \, \vert \, E=1,c\neq0\},\\
C_{4} & = & \{\lambda\in C \, \vert \, E=-1, \,c=0 \}=\{(\gamma,c)\in C \, \vert \, \gamma=2\pi n,c=0\}\},\quad n\in\mathbb{N},\\
C_{5} & = & \{\lambda\in C \, \vert \, E=1, \,c=0 \}=\{(\gamma,c)\in C \, \vert \, \gamma=2\pi n+\pi,c=0\}\},\quad n\in\mathbb{N}.
\end{eqnarray*}
Continuing the approach taken in \cite{max_sre} the subsets $C_{i}$ may be further decomposed as:
\begin{eqnarray*}
C_{1} & = & \cup_{i=0}^{1}C_{1}^{i},\quad C_{1}^{i}=\{(\gamma,c)\in C_{1} \, \vert \, \mathrm{sgn}(\cos(\gamma/2))=(-1)^{i}\},\\
C_{2} & = & C_{2}^{+}\cup C_{2}^{-},\quad C_{2}^{\pm}=\{(\gamma,c)\in C_{2} \, \vert \, \mathrm{sgn}\, c=\pm1\},\\
C_{3} & = & \cup_{i=0}^{1}(C_{3}^{i+}\cup C_{3}^{i-}),\quad C_{3}^{i\pm}=\{(\gamma,c)\in C_{3} \, \vert \, \mathrm{sgn}(\cos(\gamma/2))=(-1)^{i},\mathrm{sgn}\, c=\pm1\},\\
C_{4} & = & \cup_{i=0}^{1}C_{4}^{i},\quad C_{4}^{i}=\{(\gamma,c)\in C \, \vert \, \gamma=2\pi i,c=0\},\\
C_{5} & = & \cup_{i=0}^{1}C_{5}^{i},\quad C_{5}^{i}=\{(\gamma,c)\in C \,\vert \, \gamma=2\pi i+\pi,c=0\}.
\end{eqnarray*}
In all of the above $i=0,1$. The decomposition of the initial cylinder $C$ is depicted in Figure \ref{fig:1}.

\begin{figure}
\begin{centering}
\includegraphics[scale=0.4]{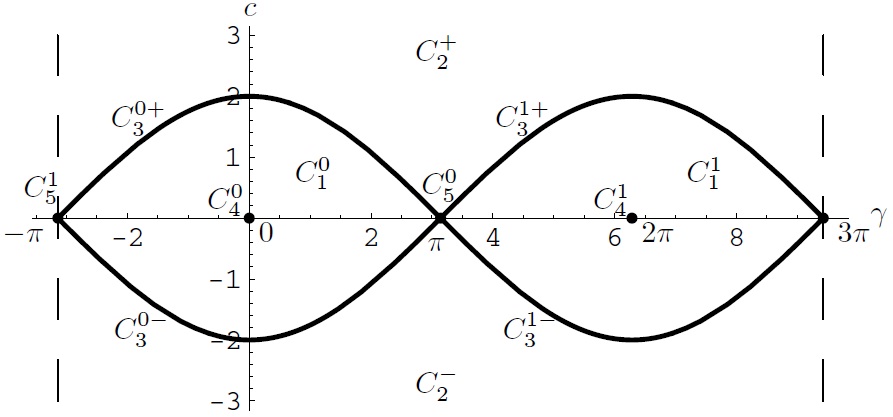}
\par\end{centering}

\caption{\label{fig:1}Decomposition of the Initial Cylinder and the Connected
Subsets}
\end{figure}


\subsection{Elliptic Coordinates}

Employing the approach developed in \cite{max_sre},\cite{Ardentov} we transform the system in terms of elliptic coordinates $(\varphi,k)$ on the domain $\cup_{i=1}^{3}C_{i}\subset C$. Note that $\varphi$ is the reparametrized time of motion and $k$ is the reparametrized energy of the pendulum. Correspondingly, we describe the system and the extremal trajectories in terms of the Jacobi elliptic functions $\textrm{sn}(\varphi,k)$, $\textrm{cn}(\varphi,k)$, $\textrm{dn}(\varphi,k)$, $\textrm{am}(\varphi,k)$, and $\textrm{E}(\varphi,k)=\intop_{0}^{\varphi}\textrm{dn}^{2}(t,k)dt$. Detailed description of the Jacobi elliptic functions may be found in \cite{Whittaker_Jacobi}.

\subsubsection{Case 1 : $\lambda=(\varphi,k)\in C_{1}$}

\begin{align}
k & =\sqrt{\frac{E+1}{2}}=\sqrt{\sin^{2}\frac{\gamma}{2}+\frac{c^{2}}{4}}\in(0,1),\label{eq:5-2}\\
\sin\frac{\gamma}{2} & =s_{1}k\,\textrm{sn}(\varphi,k),\quad s_{1}=sgn\left(\cos\frac{\gamma}{2}\right),\label{eq:5-3}\\
\cos\frac{\gamma}{2} & =s_{1}\textrm{dn}(\varphi,k),\label{eq:5-4}\\
\frac{c}{2} & =k\,\textrm{cn}(\varphi,k),\quad\varphi\in[0,4K(k)].\label{eq:5-5}
\end{align}

\begin{proposition}
In elliptic coordinates, the flow of the vertical subsystem rectifies. \end{proposition}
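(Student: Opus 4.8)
The plan is to show that the elliptic coordinates $(\varphi,k)$ convert the vertical subsystem into a trivial flow in which $k$ is constant and $\varphi$ advances at unit speed, i.e. $\dot{k}=0$ and $\dot{\varphi}=1$. Since the proposition asserts that the flow \emph{rectifies}, the concrete target is precisely the pair of equations
\begin{equation*}
\dot{k}=0,\qquad \dot{\varphi}=1.
\end{equation*}
The first of these is essentially free: by definition $k=\sqrt{(E+1)/2}$, and $E$ is a constant of motion for $\overrightarrow{H}$ as noted just above, so $\dot{k}=0$ is immediate. The real content is the second equation, and the whole proof reduces to verifying it on $C_1$.

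First I would differentiate one of the defining relations \eqref{eq:5-3}--\eqref{eq:5-5} along the pendulum flow \eqref{eq:4-10} and compare it with the known derivative of the corresponding Jacobi function. The cleanest route uses \eqref{eq:5-5}: differentiating $c/2 = k\,\mathrm{cn}(\varphi,k)$ in time, with $k$ constant, gives
\begin{equation*}
\tfrac{1}{2}\dot{c} = k\,\frac{d}{d\varphi}\mathrm{cn}(\varphi,k)\cdot\dot{\varphi} = -k\,\mathrm{sn}(\varphi,k)\,\mathrm{dn}(\varphi,k)\,\dot{\varphi}.
\end{equation*}
On the other hand, the pendulum equation gives $\dot{c}=-\sin\gamma = -2\sin\tfrac{\gamma}{2}\cos\tfrac{\gamma}{2}$, and substituting \eqref{eq:5-3}--\eqref{eq:5-4} yields $\dot{c} = -2\,(s_1 k\,\mathrm{sn})(s_1\,\mathrm{dn}) = -2k\,\mathrm{sn}(\varphi,k)\,\mathrm{dn}(\varphi,k)$ since $s_1^2=1$. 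Equating the two expressions for $\dot c$ and cancelling the common factor $-2k\,\mathrm{sn}\,\mathrm{dn}$ (nonzero away from isolated instants) forces $\dot{\varphi}=1$. The identity extends to the excluded instants by continuity of $\varphi(t)$.

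As a consistency check I would verify the same conclusion from a second relation, say \eqref{eq:5-3}: differentiating $\sin\tfrac{\gamma}{2}=s_1 k\,\mathrm{sn}(\varphi,k)$ gives $\tfrac{1}{2}\cos\tfrac{\gamma}{2}\,\dot\gamma = s_1 k\,\mathrm{cn}\,\mathrm{dn}\,\dot\varphi$, and using $\dot\gamma=c$ together with \eqref{eq:5-4}--\eqref{eq:5-5} should again collapse to $\dot\varphi=1$; agreement confirms that the three defining relations are mutually compatible and not over-determined. I would also note that the coordinate choices $s_1=\mathrm{sgn}(\cos(\gamma/2))$ and the range $\varphi\in[0,4K(k)]$ are exactly what make $(\varphi,k)$ a genuine chart on $C_1$, so that the formal computation is legitimate.

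The main obstacle is bookkeeping rather than conceptual: I must ensure the sign factor $s_1$ and the various square roots in \eqref{eq:5-2}--\eqref{eq:5-5} are handled consistently, and that the cancellation of $\mathrm{sn}\,\mathrm{dn}$ is justified despite those factors vanishing at the turning points of the elliptic motion (where $\mathrm{sn}=0$). This is resolved by invoking continuity of the rectified coordinate and the fact that $\mathrm{dn}>0$ throughout on $C_1$, so the zero set is discrete and the identity $\dot\varphi=1$, established on a dense set, holds everywhere. The analogous rectification on $C_2$ and $C_3$ would follow by the same differentiation scheme with the case-specific elliptic substitutions.
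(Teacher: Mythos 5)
Your proof is correct and follows essentially the same route as the paper: both establish $\dot{k}=0$ from conservation of $E$, differentiate the relation $c/2=k\,\mathrm{cn}(\varphi,k)$ along the pendulum flow, substitute $\sin\gamma=2k\,\mathrm{sn}(\varphi,k)\,\mathrm{dn}(\varphi,k)$ obtained from \eqref{eq:5-3}--\eqref{eq:5-4}, and cancel to get $\dot{\varphi}=1$. Your added care about the zeros of $\mathrm{sn}$ (handled by continuity) and the consistency check are refinements the paper omits, but they do not change the argument.
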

\begin{proof}
Using (\ref{eq:5-2}), we get $\dot{k}=0$ since $\dot{E}=0$.
Using (\ref{eq:5-5}), the derivatives of elliptic functions defined in \cite{Whittaker_Jacobi} and taking into account that $\dot{k}=0$, 
\begin{align*}
\frac{\dot{c}}{2} & =k\frac{d}{d\varphi}\textrm{cn}(\varphi,k).\frac{d\varphi}{dt},\\
-\sin\gamma & =-2k \, \textrm{sn}(\varphi,k)\textrm{dn}(\varphi,k)\dot{\varphi},
\end{align*}
thus we have,
\begin{equation}
\dot{\varphi}=\frac{\sin\gamma}{2k\,\textrm{sn}(\varphi,k).\textrm{dn}(\varphi,k)}.\label{eq:5-6}
\end{equation}
Now using (\ref{eq:5-3}),(\ref{eq:5-4}):
\begin{align*}
\sin\frac{\gamma}{2}\cos\frac{\gamma}{2} & =s_{1}k\,\textrm{sn}(\varphi,k).s_{1}\textrm{dn}(\varphi,k),\\
 \sin\gamma & =2k\,\textrm{sn}(\varphi,k).\textrm{dn}(\varphi,k).
\end{align*}
Thus (\ref{eq:5-6}) becomes:
\begin{equation*}
\dot{\varphi}=1.
\end{equation*}

\hfill$\square$
\end{proof}

\subsubsection{Case 2 : $\lambda=(\varphi,k)\in C_{2}$}

\begin{align}
k & =\sqrt{\frac{2}{E+1}}=\sqrt{\frac{1}{\sin^{2}\frac{\gamma}{2}+\frac{c^{2}}{4}}}\in(0,1),\label{eq:5-7}\\
\sin\frac{\gamma}{2} & =s_{2}\textrm{sn}\left(\frac{\varphi}{k},k\right),\quad s_{2}=\mathrm{sgn}(c),\label{eq:5-8}\\
\cos\frac{\gamma}{2} & =\textrm{cn}\left(\frac{\varphi}{k},k\right),\label{eq:5-9}\\
\frac{c}{2} & =\frac{s_{2}}{k}\textrm{dn}\left(\frac{\varphi}{k},k\right),\quad\varphi\in\left[0,4kK(k)\right].\label{eq:5-10}
\end{align}

\subsubsection{Case 3 : $\lambda=(\varphi,k)\in C_{3}$ }

\begin{align}
k & =1,\label{eq:5-11}\\
\sin\frac{\gamma}{2} & =s_{1}s_{2}\tanh\varphi,\quad s_{1}=\mathrm{sgn}\left(\cos\frac{\gamma}{2}\right),\quad s_{2}=\mathrm{sgn}(c),\label{eq:5-12}\\
\cos\frac{\gamma}{2} & =s_{1}/\cosh\varphi,\label{eq:5-13}\\
\frac{c}{2} & =s_{2}/\cosh\varphi,\quad\varphi\in(-\infty,\infty).\label{eq:5-14}
\end{align}
Using the procedure outlined for the Case 1, it can be proved that in the coordinates $(\varphi,k)$, the flow of the pendulum rectifies for Cases 2 and 3 as well. 

\subsection{Integration of the Vertical Subsystem}

Since the flow of vertical subsystem rectifies in the elliptic coordinates, therefore, the vertical subsystem is trivially integrated as $\varphi_{t}=t+\varphi$ and $k=constant$, where $\varphi$ is the value of $\varphi_{t}$ at $t=0$. 

\subsection{Integration of the Horizontal Subsystem}

In the following we consider integration of the horizontal subsystem (\ref{eq:5-1}) for Cases 1-3 noted above. Assuming zero initial state, i.e., $x(0)=y(0)=z(0)=0$ since $q_{0}=Id$.

\subsubsection{Case 1 : $\lambda=(\varphi,k)\in C_{1}$}
\begin{theorem}
Extremal trajectories in Case 1 are parametrized as follows:
\begin{equation}
\left(\begin{array}{c}
x_{t}\\
y_{t}\\
z_{t}
\end{array}\right)=\left(\begin{array}{c}
\frac{s_{1}}{2}\left[\left(w+\frac{1}{w\left(1-k^{2}\right)}\right)\left[\mathrm{E}(\varphi_{t})-\mathrm{E}(\varphi)\right]+\left(\frac{k}{w(1-k^{2})}-kw\right)\left[\mathrm{sn}\,\varphi_{t}-\mathrm{sn}\,\varphi \right]\right]\\
\frac{1}{2}\left[\left(w-\frac{1}{w\left(1-k^{2}\right)}\right)\left[\mathrm{E}(\varphi_{t})-\mathrm{E}(\varphi)\right]-\left(\frac{k}{w\left(1-k^{2}\right)}+kw\right)\left[\mathrm{sn}\,\varphi_{t}-\mathrm{sn}\, \varphi\right]\right]\\
s_{1}\ln\left[(\mathrm{dn}\,\varphi_{t}-k\mathrm{cn}\,\varphi_{t}).w\right]
\end{array}\right)\label{eq:5-15}
\end{equation}
where $w=\frac{1}{\mathrm{dn}\varphi-k\mathrm{cn}\varphi}$.\end{theorem}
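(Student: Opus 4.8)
The plan is to exploit the rectification of the vertical flow proved above: since $\varphi_t = t + \varphi$ with $k$ constant, we have $\dot\varphi_t = 1$, so every integral $\int_0^t(\cdot)\,dt$ becomes $\int_\varphi^{\varphi_t}(\cdot)\,d\tau$ with the integrand written through Jacobi functions of $\tau$. I would begin with the $z$-equation, which is simplest and feeds the other two. From (\ref{eq:5-1}) and (\ref{eq:5-3}), $\dot z = \sin\frac{\gamma}{2} = s_1 k\,\mathrm{sn}(\varphi_t,k)$; using $\frac{d}{d\tau}\mathrm{dn} = -k^2\mathrm{sn}\,\mathrm{cn}$ and $\frac{d}{d\tau}\mathrm{cn} = -\mathrm{sn}\,\mathrm{dn}$ one checks in one line that $\frac{d}{d\tau}\ln(\mathrm{dn}\,\tau - k\,\mathrm{cn}\,\tau) = k\,\mathrm{sn}\,\tau$. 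Integrating from $\varphi$ to $\varphi_t$ with $z(0)=0$ then gives $z_t = s_1\ln[(\mathrm{dn}\,\varphi_t - k\,\mathrm{cn}\,\varphi_t)\,w]$, where $w = (\mathrm{dn}\,\varphi - k\,\mathrm{cn}\,\varphi)^{-1}$, which is exactly the third component of (\ref{eq:5-15}); note the initial condition is automatic, since the bracket equals $1$ at $\varphi_t=\varphi$.

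Next I would convert this into the $\cosh z_t$ and $\sinh z_t$ that appear in the $x$- and $y$-equations. Setting $W_t = (\mathrm{dn}\,\varphi_t - k\,\mathrm{cn}\,\varphi_t)\,w$ so that $e^{z_t} = W_t^{s_1}$, one finds $\cosh z_t = \frac12(W_t + W_t^{-1})$, independent of $s_1$, while $\sinh z_t = \frac{s_1}{2}(W_t - W_t^{-1})$ carries the sign. Substituting $\cos\frac{\gamma}{2} = s_1\,\mathrm{dn}\,\varphi_t$ from (\ref{eq:5-4}) into (\ref{eq:5-1}) and using $s_1^2 = 1$, the $x$- and $y$-equations reduce to $\tfrac12$ times integrals of $\mathrm{dn}\,\tau\,(\mathrm{dn}\,\tau - k\,\mathrm{cn}\,\tau)$ and of $\mathrm{dn}\,\tau/(\mathrm{dn}\,\tau - k\,\mathrm{cn}\,\tau)$, weighted by $w$ and $1/w$, with an overall $s_1$ for $x$ and none for $y$.

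The key computation, and the step I expect to be the main obstacle, is the second antiderivative $\int \mathrm{dn}\,\tau/(\mathrm{dn}\,\tau - k\,\mathrm{cn}\,\tau)\,d\tau$, whose denominator is not directly integrable. The device is to rationalize: from $\mathrm{dn}^2 = 1 - k^2\mathrm{sn}^2$ and $\mathrm{cn}^2 = 1 - \mathrm{sn}^2$ one has the identity $\mathrm{dn}^2 - k^2\mathrm{cn}^2 = 1 - k^2$, hence $(\mathrm{dn}\,\tau - k\,\mathrm{cn}\,\tau)^{-1} = (\mathrm{dn}\,\tau + k\,\mathrm{cn}\,\tau)/(1-k^2)$. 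This collapses the hard integral into the two elementary pieces $\int \mathrm{dn}^2\,d\tau = \mathrm{E}(\tau)$ and $\int \mathrm{dn}\,\mathrm{cn}\,d\tau = \mathrm{sn}\,\tau$ (the latter since $\frac{d}{d\tau}\mathrm{sn} = \mathrm{cn}\,\mathrm{dn}$), giving $\frac{1}{1-k^2}[\mathrm{E}(\tau) + k\,\mathrm{sn}\,\tau]$. The first antiderivative is immediate from the same two formulas: $\int \mathrm{dn}\,\tau\,(\mathrm{dn}\,\tau - k\,\mathrm{cn}\,\tau)\,d\tau = \mathrm{E}(\tau) - k\,\mathrm{sn}\,\tau$.

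Finally I would assemble the pieces, abbreviating $\Delta\mathrm{E} = \mathrm{E}(\varphi_t) - \mathrm{E}(\varphi)$ and $\Delta\mathrm{sn} = \mathrm{sn}\,\varphi_t - \mathrm{sn}\,\varphi$. The $x$-integral becomes $\frac{s_1}{2}\bigl[w(\Delta\mathrm{E} - k\Delta\mathrm{sn}) + \frac{1}{w(1-k^2)}(\Delta\mathrm{E} + k\Delta\mathrm{sn})\bigr]$, and grouping the $\Delta\mathrm{E}$ and $\Delta\mathrm{sn}$ coefficients reproduces the first component of (\ref{eq:5-15}); the $y$-integral differs only in the relative sign inherited from $\sinh$ versus $\cosh$, namely $\frac{1}{2}\bigl[w(\Delta\mathrm{E} - k\Delta\mathrm{sn}) - \frac{1}{w(1-k^2)}(\Delta\mathrm{E} + k\Delta\mathrm{sn})\bigr]$, and yields the second component. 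Apart from the rationalization step, the remaining work is bookkeeping against the standard derivative/integral table for the Jacobi functions, so I would keep the definition $\mathrm{E}(\varphi,k) = \int_0^\varphi \mathrm{dn}^2(t,k)\,dt$ explicit to make the vanishing of all components at $t=0$ transparent.
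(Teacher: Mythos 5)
Your proposal is correct and follows essentially the same route as the paper's proof: integrate $\dot z = s_1 k\,\mathrm{sn}\,\varphi_t$ first to get $z_t = s_1\ln[(\mathrm{dn}\,\varphi_t-k\,\mathrm{cn}\,\varphi_t)\,w]$, expand $\cosh z_t$ and $\sinh z_t$ into $W_t$ and $W_t^{-1}$, rationalize via $\mathrm{dn}^2-k^2\mathrm{cn}^2 = 1-k^2$, and reduce everything to the antiderivatives $\int\mathrm{dn}^2 = \mathrm{E}$ and $\int\mathrm{cn}\,\mathrm{dn} = \mathrm{sn}$. Your write-up is in fact slightly more explicit than the paper's (which hides the rationalization inside ``standard identities''), but the decomposition, the key identity, and the final bookkeeping are identical.
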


\begin{proof}
From (\ref{eq:5-1}) consider $\dot{z}=\sin\frac{\gamma}{2}=s_{1}k\,\mathrm{sn}(\varphi,k)$.
The solution to this ODE can be written as:
\begin{equation*}
z_{t}=\intop_{\varphi}^{\varphi_{t}}s_{1}k\,\mathrm{sn}\,\varphi \, d\varphi.
\end{equation*}
Using the integration formulas for the Jacobi elliptic functions \cite{Table_Int}, we have:
\begin{equation*}
z_{t}=s_{1}\ln(\mathrm{dn}\,\varphi_{t}-k\mathrm{cn}\,\varphi_{t})-s_{1}\ln(\mathrm{dn}\,\varphi-k\mathrm{cn}\,\varphi).
\end{equation*}
Let $\ln w=-\ln(\mathrm{dn}\,\varphi-k\mathrm{cn}\,\varphi)$,
$w=\frac{1}{\mathrm{dn}\,\varphi-k\mathrm{cn}\,\varphi}$, then:
\begin{eqnarray*}
z_{t} & = & s_{1}\ln[(\mathrm{dn}\,\varphi_{t}-k\mathrm{cn}\,\varphi_{t}).w].
\end{eqnarray*}
From (\ref{eq:5-1}) now consider,
\begin{eqnarray*}
\dot{x} & = & \cos\frac{\gamma}{2}\cosh z=s_{1}\mathrm{dn}\,\varphi_{t}\cosh\left(s_{1}\ln\left[(\mathrm{dn}\,\varphi_{t}-k\mathrm{cn}\,\varphi_{t} ).w\right]\right),\\
\dot{x} & = & \frac{s_{1}}{2}\left(w.\mathrm{dn^{2}}\varphi_{t}-kw.\mathrm{dn}\,\varphi_{t}\mathrm{cn}\,\varphi_{t}+\frac{\mathrm{dn}\,\varphi_{t}}{(\mathrm{dn}\,\varphi_{t}-k\mathrm{cn}\,\varphi)_{t}.w}\right).\nonumber 
\end{eqnarray*}
This can be integrated as:
\begin{equation*}
x_{t}=\frac{s_{1}}{2}\left[w\intop_{\varphi}^{\varphi_{t}}\mathrm{\mathrm{dn^{2}}}\varphi_{t} d\varphi_{t}-kw\intop_{\varphi}^{\varphi_{t}}\mathrm{\mathrm{dn}}\,\varphi_{t}\mathrm{\mathrm{cn}}\,\varphi_{t} d\varphi_{t}+\frac{1}{w}\intop_{\varphi}^{\varphi_{t}}\frac{\mathrm{dn^{2}}\varphi_{t}+k\mathrm{cn}\,\varphi_{t}\mathrm{dn}\,\varphi_{t}}{\mathrm{dn^{2}}\varphi_{t}-k^{2}\mathrm{cn^{2}}\varphi_{t}}d\varphi_{t}\right].
\end{equation*}
Now using the standard identities of the elliptic functions, the result of integration can be written as:
\begin{equation*}
x_{t}=\frac{s_{1}}{2}\left[\left(w+\frac{1}{w\left(1-k^{2}\right)}\right)\left[\mathrm{E}(\varphi_{t})-\mathrm{E}(\varphi)\right]+\left(\frac{k}{w\left(1-k^{2}\right)}-kw\right)\left[\mathrm{sn}\,\varphi_{t}-\mathrm{sn}\,\varphi\right]\right].
\end{equation*}
From (\ref{eq:5-1}) now consider,
\begin{eqnarray*}
\dot{y} & = & \cos\frac{\gamma}{2}\sinh z=s_{1}\mathrm{dn}\,\varphi_{t}\sinh(s_{1}\ln[(\mathrm{dn}\,\varphi_{t}-k\mathrm{cn}\,\varphi)_{t}.w]),\nonumber \\
\dot{y} & = & \mathrm{dn}\,\varphi_{t}\sinh(\ln[(\mathrm{dn}\,\varphi_{t}-k\mathrm{cn}\,\varphi_{t}).w]).
\end{eqnarray*}
The integration follows the same pattern as described above and hence final result of integration of $\dot{y}$ can be written as:
\begin{equation*}
y_{t}=\frac{1}{2}\left[\left(w-\frac{1}{w\left(1-k^{2}\right)}\right)\left[\mathrm{E}(\varphi_{t})-\mathrm{E}(\varphi)\right]-\left(\frac{k}{w\left(1-k^{2}\right)}+kw\right)\left[\mathrm{sn}\,\varphi_{t}-\mathrm{sn}\,\varphi\right]\right].
\end{equation*} \hfill$\square$ 
\end{proof}

\subsubsection{Case 2 : $\lambda=(\varphi,k)\in C_{2}$ }
\begin{theorem}
\begin{flushleft}
Extremal trajectories in Case 2 are parametrized as follows:
\begin{eqnarray}
x_{t} & = & \frac{1}{2}\left(\frac{1}{w(1-k^{2})}-w\right)\left[\mathrm{E}(\psi_{t})-\mathrm{E}(\psi)-k^{\prime2}(\psi_{t}-\psi)\right]\nonumber \\
 & + & \frac{1}{2}\left(kw+\frac{k}{w(1-k^{2})}\right)\left[\mathrm{sn}\,\psi_{t}-\mathrm{sn}\,\psi\right],\nonumber \\
y_{t} & = & -\frac{s_{2}}{2}\left(\frac{1}{w(1-k^{2})}+w\right)\left[\mathrm{E}(\psi_{t})-\mathrm{E}(\psi)-k^{\prime2}(\psi_{t}-\psi)\right]\nonumber \\
 & + & \frac{s_{2}}{2}\left(kw-\frac{k}{w(1-k^{2})}\right)\left[\mathrm{sn}\,\psi_{t}-\mathrm{sn}\,\psi\right],\nonumber \\
z_{t} & = & s_{2}\ln[(\mathrm{dn}\,\psi_{t}-k\mathrm{cn}\,\psi_{t}).w],\label{eq:5-16}
\end{eqnarray}
where $w=\frac{1}{\mathrm{dn}\,\psi-k\mathrm{cn}\,\psi}$. 
\par\end{flushleft}\end{theorem}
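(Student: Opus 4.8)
The plan is to mirror the integration scheme of Case 1 verbatim, replacing the Case 1 substitutions (\ref{eq:5-2})--(\ref{eq:5-5}) with those of Case 2, (\ref{eq:5-7})--(\ref{eq:5-10}), where the Jacobi functions now carry the argument $\psi_t=\varphi_t/k$, so that $\psi=\varphi/k$ is its initial value. The only structural novelty is the time reparametrization: since $\varphi_t=t+\varphi$ and the argument entering the elliptic functions is $\psi_t=\varphi_t/k$, one has $d\psi_t/dt=1/k$, hence every integral $\int_0^t(\cdot)\,d\tau$ converts to $k\int_\psi^{\psi_t}(\cdot)\,du$. This single factor of $k$, played against the $1/k$ appearing in (\ref{eq:5-10}), is the bookkeeping that must be tracked carefully.

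First I would integrate the $z$-equation. From (\ref{eq:5-1}) and (\ref{eq:5-8}), $\dot z=\sin\frac{\gamma}{2}=s_2\,\mathrm{sn}(\psi_t,k)$, so $z_t=s_2 k\int_\psi^{\psi_t}\mathrm{sn}\,u\,du$. The antiderivative $\int k\,\mathrm{sn}\,u\,du=\ln(\mathrm{dn}\,u-k\,\mathrm{cn}\,u)$, verified by differentiating with $(\mathrm{dn})'=-k^2\mathrm{sn}\,\mathrm{cn}$ and $(\mathrm{cn})'=-\mathrm{sn}\,\mathrm{dn}$, gives after introducing $w=1/(\mathrm{dn}\,\psi-k\,\mathrm{cn}\,\psi)$ exactly $z_t=s_2\ln[(\mathrm{dn}\,\psi_t-k\,\mathrm{cn}\,\psi_t)w]$, the third line of (\ref{eq:5-16}).

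Next I would substitute this $z_t$ into $\dot x=\cos\frac{\gamma}{2}\cosh z$ and $\dot y=\cos\frac{\gamma}{2}\sinh z$, using $\cos\frac{\gamma}{2}=\mathrm{cn}\,\psi_t$ from (\ref{eq:5-9}) together with $\cosh(\ln A)=\tfrac12(A+A^{-1})$ and $\sinh(\ln A)=\tfrac12(A-A^{-1})$, where $A=(\mathrm{dn}\,\psi_t-k\,\mathrm{cn}\,\psi_t)w$; the parity of $\cosh$ and $\sinh$ absorbs the sign $s_2$, which is why $y_t$ inherits the prefactor $s_2$ while $x_t$ does not. Each integrand then splits into a polynomial part $w\,\mathrm{cn}\,u\,(\mathrm{dn}\,u-k\,\mathrm{cn}\,u)$ and a reciprocal part $\frac{1}{w}\,\frac{\mathrm{cn}\,u}{\mathrm{dn}\,u-k\,\mathrm{cn}\,u}$. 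The reciprocal part is rationalized by the key identity $\mathrm{dn}^2 u-k^2\mathrm{cn}^2 u=1-k^2=k'^2$, which turns it into $\frac{1}{w k'^2}\,\mathrm{cn}\,u\,(\mathrm{dn}\,u+k\,\mathrm{cn}\,u)$. Both parts now reduce to the three elementary quadratures $\int\mathrm{cn}\,u\,\mathrm{dn}\,u\,du=\mathrm{sn}\,u$, $\int\mathrm{dn}^2 u\,du=\mathrm{E}(u)$, and $\int\mathrm{cn}^2 u\,du=\frac{1}{k^2}(\mathrm{E}(u)-k'^2 u)$.

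Collecting the coefficients of $[\mathrm{sn}\,\psi_t-\mathrm{sn}\,\psi]$ and of $[\mathrm{E}(\psi_t)-\mathrm{E}(\psi)-k'^2(\psi_t-\psi)]$, and distributing the prefactor $\tfrac{k}{2}$ (respectively $\tfrac{s_2 k}{2}$) coming from the reparametrization, reproduces the first two lines of (\ref{eq:5-16}). The main obstacle is arithmetic rather than conceptual: the linear term $-k'^2(\psi_t-\psi)$, absent from the Case 1 formula (\ref{eq:5-15}), arises solely from the $\int\mathrm{cn}^2 u\,du$ quadrature, and it is easy to drop it or to misplace the factor $1/k^2$ when rewriting $\mathrm{cn}^2$ through $\mathrm{dn}^2$. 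Reconciling the factor $k$ from $d\tau=k\,du$ against the $1/k$ concealed in the $w$-dependent coefficients is the one place where sign and normalization errors are most likely to creep in, so I would verify the final coefficients by specializing to the limit $\psi\to 0$.
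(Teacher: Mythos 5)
Your proposal is correct and follows essentially the same route as the paper: the paper's own proof consists precisely of the substitution $\psi=\varphi/k$, $\psi_t=\varphi_t/k=\psi+t/k$ followed by the remark that the integration then proceeds as in Case 1, which is exactly your plan. You simply carry out the details the paper leaves implicit (the $d\tau = k\,du$ reparametrization, the identity $\mathrm{dn}^2u-k^2\mathrm{cn}^2u=k'^2$, and the $\int\mathrm{cn}^2u\,du$ quadrature producing the $-k'^2(\psi_t-\psi)$ term), and your computations do reproduce (\ref{eq:5-16}) exactly.
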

\begin{proof}
Consider the horizontal system (\ref{eq:5-1}) for Case 2 (\ref{eq:5-7})-(\ref{eq:5-10})
and substitute $\psi=\frac{\varphi}{k}$ and $\psi_{t}=\frac{\varphi_{t}}{k}=\psi+\frac{t}{k}$. The proof of integration then follows from the procedure outlined in Case 1.\hfill$\square$ 
\end{proof}

\subsubsection{Case 3 : $\lambda=(\varphi,k)\in C_{3}$}
\begin{theorem}
Extremal trajectories in Case 3 are parametrized as follows:
\begin{equation}
\left(\begin{array}{c}
x_{t}\\
y_{t}\\
z_{t}
\end{array}\right)=\left(\begin{array}{c}
\frac{s_{1}}{2}\left[\frac{1}{w}\left(\varphi_{t}-\varphi \right)+w\left(\tanh\varphi_{t}-\tanh\varphi \right)\right]\\
\frac{s_{2}}{2}\left[\frac{1}{w}\left(\varphi_{t}-\varphi \right)-w\left(\tanh\varphi_{t}-\tanh\varphi \right)\right]\\
-s_{1}s_{2}\ln[w\, \mathrm{sech} \,\varphi_{t}]\label{eq:5-17}
\end{array}\right)
\end{equation}
where $w=\cosh\varphi$. \end{theorem}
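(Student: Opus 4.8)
The plan is to follow verbatim the scheme already used for Case~1, exploiting the fact that at $k=1$ the Jacobi elliptic functions degenerate into elementary hyperbolic functions: $\mathrm{sn}\,\varphi\to\tanh\varphi$, $\mathrm{cn}\,\varphi\to\mathrm{sech}\,\varphi$, $\mathrm{dn}\,\varphi\to\mathrm{sech}\,\varphi$, and $\mathrm{E}(\varphi)\to\tanh\varphi$. I would emphasize at the outset that one cannot simply pass to the limit $k\to1$ inside the Case~1 formula, since there $w=1/(\mathrm{dn}\,\varphi-k\,\mathrm{cn}\,\varphi)$ blows up as $\mathrm{dn}\,\varphi-\mathrm{cn}\,\varphi\to0$; the integration must be redone directly, and the quantity $w$ is accordingly redefined as $w=\cosh\varphi$.

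First I would integrate the $z$-equation. From (\ref{eq:5-1}) and the Case~3 data (\ref{eq:5-12}), $\dot z=\sin\frac\gamma2=s_1 s_2\tanh\varphi_t$, so
\[
z_t=\int_\varphi^{\varphi_t}s_1 s_2\tanh\xi\,d\xi=s_1 s_2\bigl(\ln\cosh\varphi_t-\ln\cosh\varphi\bigr)=-s_1 s_2\ln\bigl[w\,\mathrm{sech}\,\varphi_t\bigr],
\]
with $w=\cosh\varphi$, which is exactly the claimed third component of (\ref{eq:5-17}). The key consequence is the pair of identities $\cosh z_t=\tfrac12\bigl(\cosh\varphi_t/w+w/\cosh\varphi_t\bigr)$ and $\sinh z_t=\tfrac{s_1 s_2}{2}\bigl(\cosh\varphi_t/w-w/\cosh\varphi_t\bigr)$, which follow because $\cosh$ is even and $\sinh$ is odd in the argument $s_1 s_2\ln(\cosh\varphi_t/w)$ while $s_1 s_2=\pm1$.

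Next I would substitute these into the $x$- and $y$-equations of (\ref{eq:5-1}), using $\cos\frac\gamma2=s_1\,\mathrm{sech}\,\varphi_t$ from (\ref{eq:5-13}). For $x$ this gives $\dot x=\cos\frac\gamma2\,\cosh z_t=\frac{s_1}{2}\bigl(\tfrac1w+\tfrac{w}{\cosh^2\varphi_t}\bigr)$, and for $y$, after collapsing $s_1\cdot s_1 s_2=s_2$, it gives $\dot y=\cos\frac\gamma2\,\sinh z_t=\frac{s_2}{2}\bigl(\tfrac1w-\tfrac{w}{\cosh^2\varphi_t}\bigr)$. Both integrands are elementary, with $\int d\varphi_t/w=(\varphi_t-\varphi)/w$ and $\int w\,\mathrm{sech}^2\varphi_t\,d\varphi_t=w(\tanh\varphi_t-\tanh\varphi)$. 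Integrating from $\varphi$ to $\varphi_t$ then reproduces exactly the first two components of (\ref{eq:5-17}).

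The only place requiring care---and hence the main, if mild, obstacle---is the sign bookkeeping: tracking how the factors $s_1$ and $s_2$ combine through the even/odd symmetries of $\cosh z_t$ and $\sinh z_t$, so that the $s_1 s_2$ appearing in $z_t$ correctly yields a leading $s_1$ in $x_t$ and a leading $s_2$ in $y_t$. As a consistency check I would confirm that these hyperbolic formulas describe the limiting behaviour of the Case~1 and Case~2 trajectories as $k\to1$ across the separatrix $C_3$; because the Case~1 $w$ diverges there, this limit is singular and is best certified by the direct computation above rather than by naive substitution.
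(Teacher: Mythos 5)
Your proof is correct and takes essentially the same route as the paper's: integrate $\dot z$ first to get $z_t=-s_1s_2\ln[w\,\mathrm{sech}\,\varphi_t]$ with $w=\cosh\varphi$, then substitute into $\dot x$ and $\dot y$ and integrate elementary hyperbolic functions, your even/odd identities for $\cosh z_t$ and $\sinh z_t$ being exactly the paper's expansion of $\cosh(\pm\ln[\cdot])$ and $\sinh(\pm\ln[\cdot])$ into exponentials. Your added remarks on the singularity of the naive $k\to1$ limit of Case~1 are sensible context but not needed for the argument.
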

\begin{proof}
Consider the horizontal system (\ref{eq:5-1}) for Case 3 (\ref{eq:5-11})-(\ref{eq:5-14}):
\begin{eqnarray*}
\dot{z} & = & \sin\frac{\gamma}{2}=s_{1}s_{2}\tanh\varphi,\\
z_{t} & = & -s_{1}s_{2}[\ln(\textrm{sech}\,\varphi_{t})-\ln(\textrm{sech}\,\varphi)].
\end{eqnarray*}
Let $-\ln(\mathrm{sech\,}\varphi)=\ln w$, $w=\cosh\varphi$,
then: 
\begin{equation*}
z_{t}=-s_{1}s_{2}\ln[w\,\textrm{sech}\,\varphi_{t}].
\end{equation*}
From (\ref{eq:5-1}) now consider,
\begin{eqnarray*}
\dot{x} & = & \cos\frac{\gamma}{2}\cosh z=s_{1}\textrm{sech}\,\varphi\cosh\left(-s_{1}s_{2}\ln[w\,\textrm{sech}\,\varphi]\right),\nonumber \\
\dot{x} & = & \frac{s_{1}\textrm{sech}\,\varphi}{2}\left[e^{\ln[w\,\textrm{sech}\,\varphi]}+e^{-\ln[w\,\textrm{sech}\,\varphi]}\right],\nonumber \\
\dot{x} & = & \frac{s_{1}\textrm{sech}\,\varphi}{2}\left[\frac{1+w^{2}\textrm{sech}^{2}\varphi}{w\,\textrm{sech}\,\varphi}\right],\nonumber \\
x_{t} & = & \frac{s_{1}}{2}\left[\frac{1}{w}\left(\varphi_{t}-\varphi \right)+w\left(\tanh\varphi_{t}-\tanh\varphi \right)\right].
\end{eqnarray*}
From (\ref{eq:5-1}) now consider,
\begin{eqnarray*}
\dot{y} & = & \cos\frac{\gamma}{2}\sinh z=s_{1}\textrm{sech}\,\varphi\sinh\left(-s_{1}s_{2}\ln[w\,\textrm{sech}\,\varphi]\right),\nonumber \\
\dot{y} & = & \frac{-s_{2}\,\textrm{sech}\,\varphi}{2}[e^{\ln[w\,\textrm{sech}\,\varphi]}-e^{-\ln[w\,\textrm{sech}\,\varphi]}],\nonumber \\
\dot{y} & = & \frac{-s_{2}\,\textrm{sech}\,\varphi}{2}\left[w\,\textrm{sech}\,\varphi-[w\,\textrm{sech}\,\varphi]^{-1}\right],\nonumber \\
y_{t} & = & \frac{s_{2}}{2}\left[\frac{1}{w}(\varphi_{t}-\varphi)-w(\tanh\varphi_{t}-\tanh\varphi)\right].
\end{eqnarray*} \hfill$\square$ 
\end{proof}
\subsection{Integration of the Horizontal Subsystem - The Degenerate Cases}

In the following we present integration of the horizontal subsystem in the degenerate cases, i.e., $\lambda\in C_{4}$ and $\lambda\in C_{5}$.

\subsubsection{Case 4 : $\lambda\in C_{4}$}
\begin{theorem}
Extremal trajectories in Case 4 are parametrized as follows:
\begin{equation}
\left(\begin{array}{c}
x_{t}\\
y_{t}\\
z_{t}
\end{array}\right)=\left(\begin{array}{c}
\mathrm{sgn}\left(\cos\frac{\gamma}{2}\right)t\\
0\\
0
\end{array}\right).\label{eq:5-18}
\end{equation}
\end{theorem}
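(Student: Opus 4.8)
The plan is to exploit the fact that the subset $C_4$ consists precisely of the stable equilibria of the pendulum~(\ref{eq:4-10}). By definition $C_4=\{(\gamma,c)\mid E=-1,\ c=0\}$, and the constraints $E=-1$, $c=0$ force $\cos\gamma=1$, i.e. $\gamma=2\pi n$. Such a point is a fixed point of the vector field $(\dot\gamma,\dot c)=(c,-\sin\gamma)$ in~(\ref{eq:4-10}), since both $c=0$ and $\sin\gamma=0$ hold there. Consequently, along the extremal flow the vertical coordinates stay frozen: $\gamma_t\equiv 2\pi n$ and $c_t\equiv 0$ for all $t$.

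First I would translate this into the Hamiltonian coordinates $h_1,h_2$. From~(\ref{eq:4-8}) and~(\ref{eq:4-9}), $\alpha=\gamma/2=\pi n$ is constant, so that $h_1=\cos\frac{\gamma}{2}=\mathrm{sgn}\!\left(\cos\frac{\gamma}{2}\right)=\pm1$ and $h_2=\sin\frac{\gamma}{2}=0$ hold identically in $t$; equivalently, $\cos\frac{\gamma}{2}$ coincides with its own sign because $\lvert\cos(\pi n)\rvert=1$.

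Next I would substitute these constant values into the horizontal subsystem~(\ref{eq:5-1}). The $z$-equation reads $\dot z=\sin\frac{\gamma}{2}=0$, which together with the initial condition $z(0)=0$ integrates to $z_t\equiv 0$. With $z_t\equiv 0$ the remaining equations collapse to $\dot y=\cos\frac{\gamma}{2}\sinh 0=0$ and $\dot x=\cos\frac{\gamma}{2}\cosh 0=\mathrm{sgn}\!\left(\cos\frac{\gamma}{2}\right)$. Integrating from the initial state $x(0)=y(0)=0$ yields $y_t\equiv 0$ and $x_t=\mathrm{sgn}\!\left(\cos\frac{\gamma}{2}\right)t$, which is exactly~(\ref{eq:5-18}).

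There is no genuine obstacle in this degenerate case: unlike Cases~1--3, where one must invoke the rectification of the pendulum flow and the integration formulas for the Jacobi elliptic functions, here the pendulum sits at rest and the horizontal system reduces to a constant-coefficient linear one. The only point deserving care is the bookkeeping of signs---recording the constant value of $h_1$ as $\mathrm{sgn}(\cos\frac{\gamma}{2})$ rather than merely $\pm1$---and observing that the resulting curve is the one-parameter subgroup generated by $f_1$, a straight line along the $x$-axis, consistent with the geometry of the problem.
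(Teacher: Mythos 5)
Your proof is correct and follows essentially the same route as the paper: substitute $\gamma=2\pi n$ into the horizontal subsystem (\ref{eq:5-1}), get $\dot z=0$ hence $z_t\equiv 0$, then $\dot y=0$ and $\dot x=\cos\frac{\gamma}{2}=\mathrm{sgn}\left(\cos\frac{\gamma}{2}\right)$, and integrate. Your opening observation---that points of $C_4$ are equilibria of the pendulum (\ref{eq:4-10}), so $\gamma_t$ and $c_t$ are constant in time---is a point the paper uses only implicitly, and making it explicit is a small improvement rather than a different approach.
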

\begin{proof}

\begin{eqnarray*}
\dot{z}& = & \sin\frac{\gamma}{2}=\sin\left(\frac{2n\pi}{2}\right)=0, \\ 
z_{t}& = & 0.
\end{eqnarray*}
Therefore,
\begin{eqnarray*}
\dot{x} & = & \cos\frac{\gamma}{2}\cosh z=\cos\left(\frac{2n\pi}{2}\right) = \mathrm{sgn}\left(\cos\frac{\gamma}{2}\right),\nonumber \\
x_{t} & = & \mathrm{sgn}\left(\cos\frac{\gamma}{2}\right)t.\nonumber \\
\end{eqnarray*}
Now,
\begin{eqnarray*}
\dot{y} & = & \cos\frac{\gamma}{2}\sinh z=\cos\left(\frac{2n\pi}{2}\right)\sinh(0)=0,\nonumber \\
y_{t} & = & 0.
\end{eqnarray*}
\hfill$\square$ 
\end{proof}

\subsubsection{Case 5 : $\lambda\in C_{5}$}
\begin{theorem}
Extremal trajectories in Case 5 are parametrized as follows:
\begin{equation}
\left(\begin{array}{c}
x_{t}\\
y_{t}\\
z_{t}
\end{array}\right)=\left(\begin{array}{c}
0\\
0\\
\mathrm{sgn}\left(\sin\frac{\gamma}{2}\right)t
\end{array}\right).\label{eq:5-19}
\end{equation}
\end{theorem}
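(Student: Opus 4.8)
The plan is to exploit the fact that $C_5$ consists precisely of the unstable equilibria of the pendulum (\ref{eq:4-10}), so that the vertical variables stay frozen and the horizontal subsystem (\ref{eq:5-1}) collapses to a trivial linear ODE.

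First I would note that on $C_5$ we have $\gamma = 2\pi n + \pi$ and $c = 0$. Feeding these into the vertical subsystem $\dot{\gamma} = c$, $\dot{c} = -\sin\gamma$ gives $\dot{\gamma} = 0$ and $\dot{c} = -\sin(2\pi n + \pi) = 0$, so $(\gamma, c)$ is an equilibrium (the unstable one, with $E = 1$) and hence $\gamma_t \equiv 2\pi n + \pi$ for all $t$.

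Next I would evaluate the half-angle coefficients in (\ref{eq:5-1}) at this frozen value. Since $\gamma/2 = \pi n + \pi/2$, the standard addition formulas give $\cos(\gamma/2) = 0$ and $\sin(\gamma/2) = \cos(\pi n) = (-1)^n = \mathrm{sgn}(\sin(\gamma/2))$. Substituting into (\ref{eq:5-1}) decouples the system:
\begin{align*}
\dot{x} &= \cos\tfrac{\gamma}{2}\,\cosh z = 0, & \dot{y} &= \cos\tfrac{\gamma}{2}\,\sinh z = 0, & \dot{z} &= \sin\tfrac{\gamma}{2} = \mathrm{sgn}\!\left(\sin\tfrac{\gamma}{2}\right).
\end{align*}
Integrating with the zero initial data $x(0) = y(0) = z(0) = 0$ yields at once $x_t = y_t = 0$ and $z_t = \mathrm{sgn}(\sin(\gamma/2))\,t$, which is exactly (\ref{eq:5-19}).

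There is essentially no obstacle here: the case is completely degenerate and every integration is immediate. The only point worth stating carefully is \emph{why} $\gamma$ remains constant --- namely that $C_5$ sits at the pendulum equilibrium --- since it is this that makes the coefficient $\cos(\gamma/2)$ vanish identically and renders the otherwise nonlinear terms $\cosh z$, $\sinh z$ irrelevant, forcing the geodesic to be a straight line along the $z$-axis traversed at unit speed. This parallels the complementary degenerate Case 4, where $\sin(\gamma/2) = 0$ instead produced motion confined to the $x$-axis.
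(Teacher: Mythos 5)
Your proof is correct and follows essentially the same route as the paper: evaluate $\cos(\gamma/2)=0$ and $\sin(\gamma/2)=\mathrm{sgn}(\sin(\gamma/2))$ at $\gamma=\pi+2\pi n$, substitute into the horizontal subsystem, and integrate from zero initial data. The only difference is that you explicitly verify that $(\gamma,c)$ is a pendulum equilibrium so that $\gamma$ stays constant --- a point the paper's proof uses implicitly --- which is a small but welcome clarification rather than a different argument.
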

\begin{proof}
\begin{eqnarray*}
\dot{z} & = & \sin\frac{\gamma}{2}=\sin\left(\frac{\pi+2n\pi}{2}\right)=\mathrm{sgn}\left(\sin\frac{\gamma}{2}\right), \\
z_{t} & = & \mathrm{sgn}\left(\sin\frac{\gamma}{2}\right)t.
\end{eqnarray*}
Now, 
\begin{eqnarray*}
\dot{x} & = & \cos\frac{\gamma}{2}\cosh z=\cos\left(\frac{\pi+2n\pi}{2}\right)\cosh z=0, \\
x_{t} & = & 0.
\end{eqnarray*}
Similarly,
\begin{eqnarray*}
\dot{y} & = & \cos\frac{\gamma}{2}\sinh z=\cos\left(\frac{\pi+2n\pi}{2}\right)\sinh z=0,\nonumber \\
y_{t} & = & 0.
\end{eqnarray*} \hfill$\square$ 
\end{proof}

\subsection{Qualitative Analysis of Projections of Extremal Trajectories on $xy$-Plane}

The standard formula for the curvature of a plane curve $(x(t),y(t))$ is given as \cite{Palais}:

\begin{equation}
\kappa=\frac{\dot{x}\ddot{y}-\ddot{x}\dot{y}}{\left(\dot{x}^{2}+\dot{y}^{2}\right)^{\frac{3}{2}}}.\label{eq:5-20}
\end{equation}
Using (\ref{eq:4-7}),(\ref{eq:5-20}) curvature of projections $\left(x(t),y(t)\right)$ of extremal trajectories of the Hamiltonian system (\ref{eq:4-7}) is given as: 
\begin{equation*}
\kappa=\frac{\sin\frac{\gamma}{2}}{\cos\frac{\gamma}{2}(\cosh2z)^{\frac{3}{2}}}.
\end{equation*}
The curves have inflection points when $\sin\frac{\gamma}{2}=0$ and cusps when $\cos\frac{\gamma}{2}=0$. We see that all the curves $\left(x(t),y(t)\right)$ have inflection points for $\lambda\in\cup_{i=1}^{3}C_{i}$ but only for $\lambda\in C_{2}$ the curves have cusps. The resulting trajectories $\left(x(t),y(t)\right)$ are shown in Figures \ref{fig:Inflectional-Trajectories C1}, \ref{fig:Inflectional-Trajectories C2}, \ref{fig:Inflectional-Trajectories C3}.
In degenerate Case 4, i.e., $\lambda\in C_{4}$, the extremal trajectories $q_{t}$ are Riemannian geodesics in the plane $\{z=0\}$. The curve $\left(x(t),y(t)\right)$ is a straight line on the $x$-axis. In Case 5, i.e., $\lambda\in C_{5}$, the curve $(x(t),y(t))$ is just the initial point $(0,0)$ for $\{x=y=0\}$. For non-zero initial conditions $x(0)=W_{x}$, $y(0)=W_{y}$, the motions of the pseudo Euclidean plane are only hyperbolic rotations whereas the translations along $x$-axis and $y$-axis are zero. The resulting trajectory is a quarter circle as $t\rightarrow \infty$ in the right sector of the unit hyperbola that is classically used to represent the pseudo Euclidean plane \cite{Minkowskian_SpaceTime}.  

\begin{figure}[ht]
\begin{centering}
\includegraphics[scale=0.4]{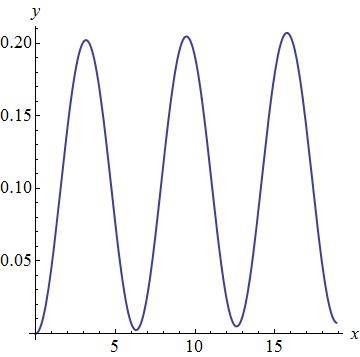}
\par\end{centering}

\centering{}\caption{\label{fig:Inflectional-Trajectories C1}Cuspless Trajectories,
$\lambda\in C_{1}$}
\end{figure}
\begin{figure}[ht]
\centering{}\includegraphics[scale=0.4]{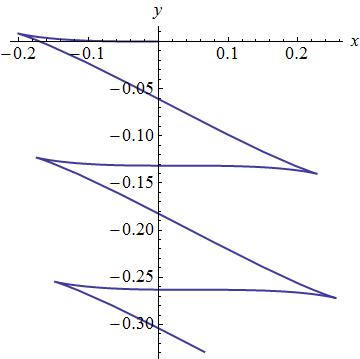}\caption{\label{fig:Inflectional-Trajectories C2}Trajectories with Cusps,
$\lambda\in C_{2}$}
\end{figure}
\begin{figure}[ht]
\centering{}\includegraphics[scale=0.4]{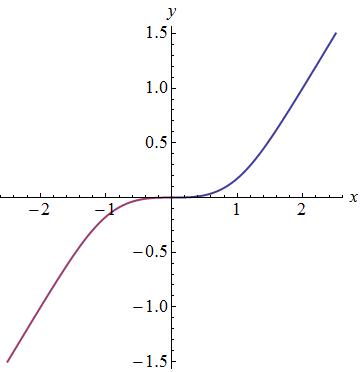}\caption{\label{fig:Inflectional-Trajectories C3}Critical Trajectories, 
$\lambda\in C_{3}$}
\end{figure}
\section{Discrete Symmetries and Maxwell Strata}
We now analyze symmetries in the vertical subsystem of the normal Hamiltonian system (\ref{eq:4-7}) to obtain a characterization of the Maxwell points. The analysis and organization of this section is based on the description of the Maxwell strata given
in \cite{max_sre}, \cite{Ardentov}, \cite{Sachkov_Dido_Symmetries},\cite{Euler_Elasticae_Sachkov} with corresponding results for the problem under consideration. 
\subsection{Symmetries of the Vertical Subsystem}

\subsubsection{Reflection Symmetries in the Vertical Subsystem}

Since the vertical subsystem of the Hamiltonian system is a mathematical pendulum (\ref{eq:4-10}), we exploit the reflection symmetries in the phase cylinder of the pendulum  to compute the discrete symmetries of the exponential mapping. The reflection symmetries in the phase portrait of a standard pendulum are given as:
\begin{equation}
\begin{alignedat}{1}\varepsilon^{1}:(\gamma,c) & \rightarrow(\gamma,-c),\\
\varepsilon^{2}:(\gamma,c) & \rightarrow(-\gamma,c),\\
\varepsilon^{3}:(\gamma,c) & \rightarrow(-\gamma,-c),\\
\varepsilon^{4}:(\gamma,c) & \rightarrow(\gamma+2\pi,c),\\
\varepsilon^{5}:(\gamma,c) & \rightarrow(\gamma+2\pi,-c),\\
\varepsilon^{6}:(\gamma,c) & \rightarrow(-\gamma+2\pi,c),\\
\varepsilon^{7}:(\gamma,c) & \rightarrow(-\gamma+2\pi,-c).
\end{alignedat}
\label{eq:6-1}
\end{equation}
Symmetries (\ref{eq:6-1}) form a symmetry group $G$ of parallelepiped with composition as group operation and $\varepsilon^{i}$ being the
elements of the group. The symmetries $\varepsilon^{3},\varepsilon^{4},\varepsilon^{7}$ preserve the direction of time, however, symmetries $\varepsilon^{1},\varepsilon^{2},\varepsilon^{5},\varepsilon^{6}$ reverse the direction of time \cite{max_sre}. As it is evident, symmetries where reflection about both axes of phase portrait occurs preserve the direction of time and others reverse the direction of time. 

\subsubsection{Reflections of Trajectories of the Pendulum}

Proposition 4.1 from \cite{max_sre} gives the transformations
that result in reflection of the phase portrait of pendulum and is reproduced here for sake of completeness.
\begin{proposition}\label{prop-6.1} Reflections (\ref{eq:6-1}) in the phase portrait of pendulum (\ref{eq:4-10}) are continued to the mappings \textit{$\varepsilon^{i}$ that transform trajectories $\delta_{s}=(\gamma_{s},c_{s})$ of the pendulum into the trajectories $\delta_{s}^{i}=(\gamma_{s}^{i},c_{s}^{i})$ as follows:}

\begin{equation}
\varepsilon^{i}:\delta=\{(\gamma_{s},c_{s})\vert s\in[0,t]\}\longmapsto\delta^{i}=\{(\gamma_{s}^{i},c_{s}^{i})\vert s\in[0,t]\},\quad i=1,\ldots,7,\label{eq:6-2}
\end{equation}
where, 
\begin{equation}
\begin{alignedat}{1}\delta^{1} & :(\gamma_{s}^{1},c_{s}^{1})=(\gamma_{t-s},-c_{t-s}),\\
\delta^{2} & :(\gamma_{s}^{2},c_{s}^{2})=(-\gamma_{t-s},c_{t-s}),\\
\delta^{3} & :(\gamma_{s}^{3},c_{s}^{3})=(-\gamma_{s},-c_{s}),\\
\delta^{4} & :(\gamma_{s}^{4},c_{s}^{4})=(\gamma_{s}+2\pi,c_{s}),\\
\delta^{5} & :(\gamma_{s}^{5},c_{s}^{5})=(\gamma_{t-s}+2\pi,-c_{t-s}),\\
\delta^{6} & :(\gamma_{s}^{6},c_{s}^{6})=(-\gamma_{t-s}+2\pi,c_{t-s}),\\
\delta^{7} & :(\gamma_{s}^{7},c_{s}^{7})=(-\gamma_{s}+2\pi,-c_{s}).
\end{alignedat}
\label{eq:6-3}
\end{equation}
\end{proposition}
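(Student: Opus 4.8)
The plan is to verify directly that each transformed curve $\delta^i$ in (\ref{eq:6-3}) satisfies the pendulum equations (\ref{eq:4-10}), using only two elementary features of the right-hand side: the nonlinearity $\sin\gamma$ is \emph{odd} and $2\pi$-periodic in $\gamma$. The conceptual point is that a map $\phi$ of the phase cylinder carries trajectories of $\overrightarrow{H}$ to trajectories exactly when it intertwines the pendulum flow. If $\phi$ leaves the vector field $(c,-\sin\gamma)$ invariant, then the intertwining $\phi\circ\Phi^{\tau}=\Phi^{\tau}\circ\phi$ preserves the time parameter $s$; if instead $\phi$ sends the vector field to its negative, then $\phi\circ\Phi^{\tau}=\Phi^{-\tau}\circ\phi$, which forces the time reversal $s\mapsto t-s$ that appears in the relevant rows of (\ref{eq:6-3}).

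First I would sort the seven reflections by this criterion. Computing the pushforward of the vector field under each $\varepsilon^i$, one finds that $\varepsilon^3,\varepsilon^4,\varepsilon^7$ (reflection through the origin, translation by $2\pi$, and their product) leave $(c,-\sin\gamma)$ invariant, while $\varepsilon^1,\varepsilon^2,\varepsilon^5,\varepsilon^6$ (the single-axis reflections and their translates) send it to its negative. For example, for $\varepsilon^1:(\gamma,c)\mapsto(\gamma,-c)$ one has $D\varepsilon^1=\mathrm{diag}(1,-1)$, so $(\varepsilon^1)_*(c,-\sin\gamma)=(c,\sin\gamma)$, which equals $-(-c,-\sin\gamma)$ evaluated at the image point; hence the field is reversed. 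This dichotomy reproduces the statement in the text that $\varepsilon^3,\varepsilon^4,\varepsilon^7$ preserve the direction of time while the remaining four reverse it.

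Next I would carry out one representative check in each class. For the time-preserving map $\varepsilon^3$, set $\gamma^3_s=-\gamma_s$, $c^3_s=-c_s$; then $\dot\gamma^3_s=-\dot\gamma_s=-c_s=c^3_s$ and $\dot c^3_s=-\dot c_s=\sin\gamma_s=-\sin(-\gamma_s)=-\sin\gamma^3_s$, where oddness of $\sin$ is used in the last equality (the translation $\varepsilon^4$ is identical but invokes $2\pi$-periodicity instead). For the time-reversing map $\varepsilon^1$, set $\gamma^1_s=\gamma_{t-s}$, $c^1_s=-c_{t-s}$; then $\dot\gamma^1_s=-\dot\gamma_{t-s}=-c_{t-s}=c^1_s$ and $\dot c^1_s=\dot c_{t-s}=-\sin\gamma_{t-s}=-\sin\gamma^1_s$, the chain-rule sign from differentiating in $t-s$ being precisely what compensates the reversal of the vector field. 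Both transformed curves thus solve (\ref{eq:4-10}).

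Finally, rather than repeat this for all seven maps I would invoke the group structure recorded after (\ref{eq:6-1}): the $\varepsilon^i$ form a group $G$, so it suffices to verify the claim on generators, say $\varepsilon^1,\varepsilon^2,\varepsilon^4$, and then obtain $\varepsilon^3=\varepsilon^1\varepsilon^2$, $\varepsilon^5=\varepsilon^1\varepsilon^4$, $\varepsilon^6=\varepsilon^2\varepsilon^4$, $\varepsilon^7=\varepsilon^3\varepsilon^4$ by composition, using that $\gamma$ lives on $\mathbb{R}/4\pi\mathbb{Z}$ so that $-\gamma-2\pi\equiv-\gamma+2\pi$. At each composition I would track the effect on the time parameter (a time-reversing map composed with a time-preserving one reverses time; two time-reversing maps preserve it), which reproduces exactly the pattern of $s$ versus $t-s$ in (\ref{eq:6-3}). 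The hard part is not analytic—there is no genuine obstacle beyond the parity and periodicity of $\sin\gamma$—but rather the bookkeeping: keeping the substitution $s\mapsto t-s$ consistent under composition so the seven formulas emerge precisely as stated. This mirrors Proposition~4.1 of \cite{max_sre}.
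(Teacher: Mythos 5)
Your proof is correct. Note, however, that the paper itself does not prove this proposition at all: it is reproduced verbatim from the reference \cite{max_sre}, and the text simply says ``For proof see Proposition 4.1 \cite{max_sre}.'' So your self-contained argument is, strictly speaking, a different route from the paper's citation-only treatment, though it coincides in substance with the standard proof in that reference: direct substitution of each candidate curve $(\gamma^i_s,c^i_s)$ into the pendulum equations $\dot\gamma=c$, $\dot c=-\sin\gamma$, using only oddness and $2\pi$-periodicity of $\sin\gamma$ together with the chain-rule sign from $s\mapsto t-s$. What your write-up adds beyond that is useful structure: the pushforward dichotomy (the fields $\varepsilon^3,\varepsilon^4,\varepsilon^7$ preserve the vector field $(c,-\sin\gamma)$, while $\varepsilon^1,\varepsilon^2,\varepsilon^5,\varepsilon^6$ reverse it) explains \emph{why} the time reversal $s\mapsto t-s$ appears in exactly those four rows of the table, rather than just verifying that it does; and the reduction to generators $\varepsilon^1,\varepsilon^2,\varepsilon^4$ via the group structure, with the observation that $-\gamma-2\pi\equiv-\gamma+2\pi$ on $\mathbb{R}/4\pi\mathbb{Z}$, correctly handles the compositions (I checked the bookkeeping: e.g.\ $\varepsilon^1\varepsilon^2$ applied to trajectories gives $s\mapsto(-\gamma_s,-c_s)=\delta^3_s$, two time reversals cancelling as you state). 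One small point worth making explicit in your final write-up: the sense in which (\ref{eq:6-3}) ``continues'' (\ref{eq:6-1}) is that at $s=t/2$ each trajectory map reduces to the corresponding point reflection, which is immediate from your formulas since $t-(t/2)=t/2$; the paper states this in the paragraph following the proposition, and including it would make your proof cover the full content of the statement.
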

\begin{figure}
\centering{}\includegraphics[scale=0.5]{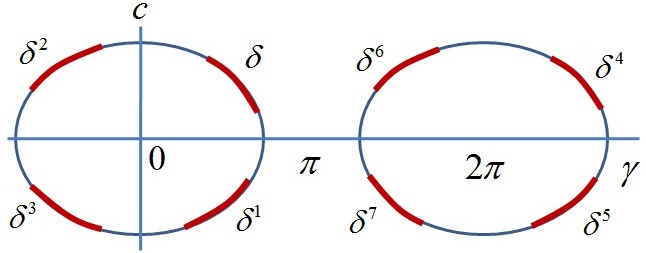}\caption{\label{fig3:Reflections}Reflections $\varepsilon^{i}:\delta\rightarrow\delta^{i}$
of trajectories of pendulum \cite{max_sre}}
\end{figure}
For the instant of time $s = t/2$, the reflections of extremal trajectories $\{(\gamma_s,c_s)\} \mapsto \{(\gamma_s^i,c_s^i)\}$ given by (\ref{eq:6-3}) reduce to the reflections of points $\{(\gamma,c)\} \mapsto \{(\gamma^i,c^i)\}$ given by (\ref{eq:6-1}). In this sense we write in Proposition 6.1 that the reflections are continued to the mappings $\varepsilon^{i}$. For proof see Proposition 4.1 {\cite{max_sre}}. Mappings (\ref{eq:6-2}) are shown in Figure \ref{fig3:Reflections}. 


\subsection{Symmetries of the Horizontal Subsystem}

\subsubsection{Reflections of Normal Extremals}

We now compute reflections of the normal extremals $q_s$ via the exponential mapping of the vertical subsystem. The canonical projection $\pi$ projects covectors from the cotangent bundle $T^{*}M$ to the manifold $M$, i.e., $\pi:\lambda \in T^{*}M\mapsto q \in M$.
The corresponding exponential map $Exp:N\rightarrow M$ of the arc-length parametrized normal extremal trajectories for $N=C \times \mathbb{R}^{+}$ is given as:
\begin{equation*}
Exp(\nu) =Exp(\lambda,s)=\pi \circ e^{s\overrightarrow{H}}(\lambda)=\pi(\lambda_{s})=q_{s},
\end{equation*}
where $\nu =(\lambda,s)\in N$ and $\lambda_{s}=(\gamma_{s},c_{s},q_{s})$ is the solution to the Hamiltonian system (\ref{eq:4-10}),(\ref{eq:5-1}). We analyze the reflections of the normal extremal trajectories of the horizontal subsystem corresponding to the reflection symmetries of the vertical subsystem. Action of the group $G$ on the normal extremals is defined as:

\begin{equation}
\varepsilon^{i}:\{\lambda_{s} \; \vert \; s \; \in[0,t]\}\mapsto\{\lambda_{s}^{i} \; \vert \; s \; \in[0,t]\},\qquad i=1,\ldots,7.\label{eq:6-4}
\end{equation}
The action $\varepsilon_{i}$ of the group $G$ on the vertical subsystem results in the reflection of trajectories of pendulum (\ref{eq:6-3}). The action of $G$ on the horizontal subsystem, i.e., the trajectories $q_{s}^{i}$ is described as follows:
\begin{proposition}
\label{prop:6.2} The image $q_{s}^{i}=(x{}_{s}^{i},y{}_{s}^{i},z{}_{s}^{i})$ of the normal extremal trajectory $q_{s}=(x_{s},y_{s},z_{s}),\; s\in[0,t]$ under the action of reflections $\varepsilon_{i}$ (\ref{eq:6-4}) is given as:

\begin{flalign*}
(1)\qquad z_{s}^{1} & =z_{t}-z_{t-s}, & {}\\
x_{s}^{1} & =\cosh z_{t}[x_{t}-x_{t-s}]-\sinh z_{t}[y_{t}-y_{t-s}], & {}\\
y_{s}^{1} & =\sinh z_{t}[x_{t}-x_{t-s}]-\cosh z_{t}[y_{t}-y_{t-s}]. & {}
\end{flalign*}
\begin{flalign*}
(2)\qquad z_{s}^{2} & =-[z_{t}-z_{t-s}], & {}\\
x_{s}^{2} & =\cosh z_{t}[x_{t}-x_{t-s}]-\sinh z_{t}[y_{t}-y_{t-s}], & {}\\
y_{s}^{2} & =-\sinh z_{t}[x_{t}-x_{t-s}]+\cosh z_{t}[y_{t}-y_{t-s}]. & {}
\end{flalign*}
\begin{flalign*}
(3)\qquad z_{s}^{3} & =-z_{s}, & {}\\
x_{s}^{3} & =x_{s}, & {}\\
y_{s}^{3} & =-y_{s}. & {}
\end{flalign*}
\begin{flalign*}
(4)\qquad z_{s}^{4} & =-z_{s}, & {}\\
x_{s}^{4} & =-x_{s}, & {}\\
y_{s}^{4} & =y_{s}. & {}
\end{flalign*}
\begin{flalign*}
(5)\qquad z_{s}^{5} & =-[z_{t}-z_{t-s}], & {}\\
x_{s}^{5} & =-\cosh z_{t}[x_{t}-x_{t-s}]+\sinh z_{t}[y_{t}-y_{t-s}], & {}\\
y_{s}^{5} & =\sinh z_{t}[x_{t}-x_{t-s}]-\cosh z_{t}[y_{t}-y_{t-s}]. & {}
\end{flalign*}
\begin{flalign*}
(6)\qquad z_{s}^{6} & =z_{t}-z_{t-s}, & {}\\
x_{s}^{6} & =-\cosh z_{t}[x_{t}-x_{t-s}]+\sinh z_{t}[y_{t}-y_{t-s}], & {}\\
y_{s}^{6} & =-\sinh z_{t}[x_{t}-x_{t-s}]+\cosh z_{t}[y_{t}-y_{t-s}]. & {}
\end{flalign*}
\begin{flalign*}
(7)\qquad z_{s}^{7} & =z_{s}, & {}\\
x_{s}^{7} & =-x_{s}, & {}\\
y_{s}^{7} & =-y_{s}. & {}
\end{flalign*}
\end{proposition}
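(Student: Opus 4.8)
The plan is to treat all seven reflections by one uniform scheme. By Proposition~\ref{prop-6.1} the reflected vertical curve $(\gamma_s^i,c_s^i)$ in (\ref{eq:6-3}) is again a trajectory of the pendulum (\ref{eq:4-10}), so its lift $q_s^i$ is the unique extremal trajectory carrying that vertical data and starting at $q_0^i=Id$, i.e. $x_0^i=y_0^i=z_0^i=0$. Feeding (\ref{eq:4-8}) into the horizontal subsystem (\ref{eq:5-1}), the reflected trajectory therefore solves
\[
\dot x_s^i=\cos\frac{\gamma_s^i}{2}\cosh z_s^i,\qquad \dot y_s^i=\cos\frac{\gamma_s^i}{2}\sinh z_s^i,\qquad \dot z_s^i=\sin\frac{\gamma_s^i}{2},
\]
with zero initial condition. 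By uniqueness for this Cauchy problem it suffices, for each $i$, to check that the stated formulas satisfy these three equations together with $q_0^i=Id$. The first step is thus to read off from (\ref{eq:6-3}) how the pair $\bigl(\cos(\gamma/2),\sin(\gamma/2)\bigr)$ transforms, using $\cos(\gamma/2+\pi)=-\cos(\gamma/2)$, $\sin(\gamma/2+\pi)=-\sin(\gamma/2)$ and the parities of $\cos$ and $\sin$. Each case is then labelled by the induced sign changes of $(h_1,h_2)$, and this label dictates all the signs in the answer.

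For the three time-preserving reflections $\varepsilon^3,\varepsilon^4,\varepsilon^7$ the curve $\gamma_s^i$ depends only on $\gamma_s$, so the verification is pointwise and immediate. For $\varepsilon^3$, for instance, $\cos(\gamma_s^3/2)=\cos(\gamma_s/2)$ and $\sin(\gamma_s^3/2)=-\sin(\gamma_s/2)$; substituting $z_s^3=-z_s$ and using that $\cosh$ is even and $\sinh$ is odd shows that $(x_s,-y_s,-z_s)$ solves the reflected system with zero initial data, which is exactly the claimed $q_s^3$. The cases $\varepsilon^4$ and $\varepsilon^7$ differ only by the extra sign $\cos(\gamma/2+\pi)=-\cos(\gamma/2)$, which flips $\dot x$ and $\dot y$ and produces the sign changes in $x_s^i,y_s^i$.

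The four time-reversing reflections $\varepsilon^1,\varepsilon^2,\varepsilon^5,\varepsilon^6$ are the crux, since $\gamma_s^i$ now depends on the reversed time $t-s$. Here I would verify the formulas by direct differentiation, using $\tfrac{d}{ds}g(t-s)=-\dot g(t-s)$ together with the original equations $\dot z_{t-s}=\sin(\gamma_{t-s}/2)$, $\dot x_{t-s}=\cos(\gamma_{t-s}/2)\cosh z_{t-s}$, $\dot y_{t-s}=\cos(\gamma_{t-s}/2)\sinh z_{t-s}$. For $z_s^i$ this is a one-line check. For $x_s^i,y_s^i$ the mechanism is the hyperbolic addition formulas: expanding $\cosh(z_t-z_{t-s})$ and $\sinh(z_t-z_{t-s})$ turns $\dot x_s^i,\dot y_s^i$ into constant-coefficient combinations $\pm\cosh z_t\,\dot x_{t-s}\mp\sinh z_t\,\dot y_{t-s}$ and $\pm\sinh z_t\,\dot x_{t-s}\mp\cosh z_t\,\dot y_{t-s}$, in which $\cosh z_t,\sinh z_t$ are constants because the terminal value $z_t$ is fixed. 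Since $\tfrac{d}{ds}(-x_{t-s})=\dot x_{t-s}$, these integrate to expressions in $x_{t-s},y_{t-s}$, and the integration constant forced by $q_0^i=Id$ is precisely the combination of the endpoint data $x_t,y_t$ that appears in (\ref{prop:6.2}); the constant factors $\cosh z_t,\sinh z_t$ reflect that a reversed velocity profile of a left-invariant system is anchored at $q_t$ rather than at the identity, so re-anchoring mixes the $x$- and $y$-components through the hyperbolic rotation by $z_t$.

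I expect the sign bookkeeping to be the only genuine obstacle: one must simultaneously track the sign changes of $h_1,h_2$ coming from the $\pm\pi$ shifts and reflections in (\ref{eq:6-3}) and the parity changes produced by $z_s^i=\pm(z_t-z_{t-s})$ inside $\cosh$ and $\sinh$. Organizing the computation by the pair of sign changes of $(h_1,h_2)$ keeps this fully under control, and each of the four time-reversing cases then reduces to a single application of the hyperbolic addition formula followed by one integration, so no essentially new calculation is needed beyond $\varepsilon^1$.
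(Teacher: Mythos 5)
Your proposal is correct and takes essentially the same route as the paper: the paper obtains each formula by integrating the reflected horizontal system with the substitution $r\mapsto t-r$ and the hyperbolic addition formulas, while you run the identical computation in reverse, verifying the stated formulas by differentiation together with uniqueness for the Cauchy problem with $q_0^i=Id$. In both arguments the key steps are reading off $\bigl(\cos(\gamma_s^i/2),\sin(\gamma_s^i/2)\bigr)$ from (\ref{eq:6-3}) and expanding $\cosh(z_t-z_{t-s})$ and $\sinh(z_t-z_{t-s})$ so that the constants $\cosh z_t,\sinh z_t$ factor out, so the content coincides (the paper, like you, writes out only $\varepsilon^1$ and declares the remaining cases similar).
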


\begin{proof}:
\textbf{Case 1 - Action of $\varepsilon^{1}:(\gamma_{s},c_{s},q_{s})\mapsto(\gamma_{s}^{1},c_{s}^{1},q_{s}^{1})=(\gamma_{t-s},-c_{t-s},q_{s}^{1})$
}
\begin{align}
\dot{z}_{s}^{1} & =\sin\frac{\gamma_{s}^{1}}{2},\nonumber \\
z_{s}^{1} & =\intop_{0}^{s}\sin\frac{\gamma_{r}^{1}}{2}dr\;=\intop_{0}^{s}\sin\frac{\gamma_{t-r}}{2}dr\;=-\left[\intop_{t}^{t-s}\sin\frac{\gamma_{p}}{2}dp\right] \;=z_{t}-z_{t-s}\label{eq:6-5}.
\end{align}
Similarly,

\begin{align}
\dot{x}_{s}^{1} & =\cos\frac{\gamma_{s}^{1}}{2}\cosh z_{s}^{1},\nonumber \\
x_{s}^{1} & =\intop_{0}^{s}\cos\frac{\gamma_{r}^{1}}{2}\cosh z_{r}^{1}dr\nonumber \\
& =\intop_{0}^{s}\cos\frac{\gamma_{t-r}}{2}\cosh(z_{t}-z_{t-r})dr\nonumber \\
& =-\intop_{t}^{t-s}\cos\frac{\gamma_{p}}{2}(\cosh z_{t}\cosh z_{p}-\sinh z_{t}\sinh z_{p})dp\nonumber \\
& =\cosh z_{t}[x_{t}-x_{t-s}]-\sinh z_{t}[y_{t}-y_{t-s}],
\end{align}
and
\begin{align}
\dot{y}_{s}^{1} & =\cos\frac{\gamma_{s}^{1}}{2}\sinh z_{s}^{1},\nonumber \\
y_{s}^{1} & =\intop_{0}^{s}\cos\frac{\gamma_{r}^{1}}{2}\sinh z_{r}^{1}dr\nonumber \\
& =\intop_{0}^{s}\cos\frac{\gamma_{t-r}}{2}\sinh(z_{t}-z_{t-r})dr\nonumber \\
& =-\intop_{t}^{t-s}\cos\frac{\gamma_{p}}{2}(\sinh z_{t}\cosh z_{p}-\cosh z_{t}\sinh z_{p})dp\nonumber \\
& =\sinh z_{t}[x_{t}-x_{t-s}]-\cosh z_{t}[y_{t}-y_{t-s}].
\end{align}
Proof of all other cases is similar.\hfill$\square$ 
\end{proof}

\subsubsection{Reflections of Endpoints of Extremal Trajectories}

Let us now consider the transformation of endpoints of  extremal trajectories resulting from action of the reflections $\varepsilon_{i}$ in the state space $M$:
\[
\varepsilon^{i}:q_{t}\rightarrow q_{t}^{i}. 
\] 
It can be readily seen from Proposition \ref{prop:6.2} that the point $q_{t}^{i}$ depends only on the endpoint $q_{t}$ and not on the whole trajectory $\{q_{s}\vert s\in[0,t]\}$. This is required to calculate the boundary conditions in the description of Maxwell strata corresponding to the reflection of extremal trajectories. 
\begin{proposition}
\label{prop:6.3} \textit{The action of reflections on endpoints of extremal trajectories can be defined as $\varepsilon^{i}:q\mapsto q^{i}$, where $q=(x,y,z)\in M,\quad q^{i}=(x^{i},y^{i},z^{i})\in M$ and,}
\end{proposition}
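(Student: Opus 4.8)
The plan is to derive the endpoint reflection formulas by specializing the trajectory reflection formulas of Proposition \ref{prop:6.2} to the terminal time $s=t$. Proposition \ref{prop:6.2} already expresses each reflected trajectory $q_s^i=(x_s^i,y_s^i,z_s^i)$ through the values of the original trajectory at the times $s$, $t$, and $t-s$, so the essential remaining work is a boundary evaluation combined with a use of the initial conditions.

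First I would set $s=t$ in each of the seven systems of Proposition \ref{prop:6.2}. Under this substitution the argument $t-s$ collapses to $0$, so every occurrence of $x_{t-s}$, $y_{t-s}$, $z_{t-s}$ becomes $x_0$, $y_0$, $z_0$. Since all normal extremals emanate from the identity $q_0=Id$, we have $x_0=y_0=z_0=0$, whence each bracket $[x_t-x_{t-s}]$, $[y_t-y_{t-s}]$, $[z_t-z_{t-s}]$ reduces to $x_t$, $y_t$, $z_t$ respectively. Writing $q=(x,y,z)$ for the endpoint $q_t$, this immediately yields the closed forms; for instance in Case~1 one obtains $z^1=z$, $x^1=x\cosh z-y\sinh z$, $y^1=x\sinh z-y\cosh z$, and analogously for $i=2,\dots,7$.

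Second, I would observe that every resulting expression involves only the terminal coordinates $(x,y,z)=q_t$ and none of the intermediate values $q_s$ with $s<t$. This is precisely the observation announced in the paragraph preceding the statement, and it is what legitimizes regarding each reflection as a map $\varepsilon^i:q\mapsto q^i$ on the state space $M$ rather than on the space of whole trajectories. No new integration is required, since the hyperbolic addition identities that appear, such as $\cosh(z_t-z_p)=\cosh z_t\cosh z_p-\sinh z_t\sinh z_p$, were already invoked inside the proof of Proposition \ref{prop:6.2}.

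I do not expect a genuine obstacle here, as the statement is essentially the boundary specialization of Proposition \ref{prop:6.2}. The only point demanding care is the bookkeeping of signs and of the hyperbolic factors $\cosh z$ and $\sinh z$ across the seven cases: the time-reversing reflections $\varepsilon^1,\varepsilon^2,\varepsilon^5,\varepsilon^6$ carry the factors $\cosh z_t$, $\sinh z_t$ arising from evaluation at the terminal time, whereas the time-preserving reflections $\varepsilon^3,\varepsilon^4,\varepsilon^7$ act diagonally on $(x,y,z)$ with only sign changes; keeping these two patterns consistent is what ensures that the endpoint map respects the left-invariant group structure of $\mathrm{SH}(2)$.
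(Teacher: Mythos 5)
Your proposal is correct and matches the paper's own proof, which is exactly the substitution $s=t$ and $(x_0,y_0,z_0)=(0,0,0)$ in Proposition \ref{prop:6.2}. The additional remarks about the endpoint depending only on $q_t$ and about the sign/hyperbolic-factor bookkeeping are consistent with the discussion the paper gives just before the statement, so nothing is missing and nothing differs in substance.
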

\begin{align}
(x^{1},y^{1},z^{1}) & =(x\cosh z-y\sinh z,\, x\sinh z-y\cosh z,\, z),\nonumber \\
(x^{2},y^{2},z^{2}) & =(x\cosh z-y\sinh z,\,-x\sinh z+y\cosh z,\,-z),\nonumber \\
(x^{3},y^{3},z^{3}) & =(x,\,-y,\,-z),\nonumber \\
(x^{4},y^{4},z^{4}) & =(-x,\, y,\,-z),\label{eq:6-6}\\
(x^{5},y^{5},z^{5}) & =(-x\cosh z+y\sinh z,\, x\sinh z-y\cosh z,\, -z),\nonumber \\
(x^{6},y^{6},z^{6}) & =(-x\cosh z+y\sinh z,\,-x\sinh z+y\cosh z,\, z),\nonumber \\
(x^{7},y^{7},z^{7}) & =(-x,\,-y,\, z).\nonumber 
\end{align}

\begin{proof}: 
Substitute $s=t$ and $(x_{0},y_{0},z_{0})=(0,0,0)$ in Proposition \ref{prop:6.2}.\hfill$\square$ 
\end{proof}
Notice that Proposition \ref{prop:6.3} defines the action of reflections in the image of the exponential mapping.

\subsection{Reflections as Symmetries of the Exponential Mapping}

Here we calculate explicit formulas for initial values of trajectories of the pendulum corresponding to the reflections. These will be useful in characterizing the fixed points of the reflections in the preimage of the exponential map. The action of reflection in the preimage of exponential mapping is defined as: 
\[
\varepsilon^{i}:N\rightarrow N,\qquad\varepsilon^{i}:\nu=(\lambda,t)=(\gamma,c,t)\mapsto\nu^{i}=(\lambda^{i},t)=(\gamma^{i},c^{i},t),
\]
where $(\gamma,c)$ are the trajectories of the pendulum with initial conditions $(\gamma_{0},c_{0})$ and $(\gamma^{i},c^{i})$ are the
reflections of the trajectories with initial conditions $(\gamma_{0}^{i},c_{0}^{i})$. The following proposition (a reproduction of Proposition 4.4 \cite{max_sre}) gives explicit formulas for $(\gamma^{i},c^{i})$. 
\begin{proposition}
\textbf{\label{prop-6.4}} \textit{Let $\nu=(\lambda,t)=(\gamma,c,t)\in N,\quad \nu^{i}=(\lambda^{i},t)=(\gamma^{i},c^{i},t)\in N$.
Then,}
\begin{equation}
\begin{alignedat}{1}(\gamma^{1},c^{1}) & =(\gamma_{t},-c_{t}),\\
(\gamma^{2},c^{2}) & =(-\gamma_{t},c_{t}),\\
(\gamma^{3},c^{3}) & =(-\gamma,-c),\\
(\gamma^{4},c^{4}) & =(\gamma+2\pi,c),\\
(\gamma^{5},c^{5}) & =(\gamma_{t}+2\pi,-c_{t}),\\
(\gamma^{6},c^{6}) & =(-\gamma_{t}+2\pi,c_{t}),\\
(\gamma^{7},c^{7}) & =(-\gamma+2\pi,-c).
\end{alignedat}
\label{eq:6-7}
\end{equation}
\end{proposition}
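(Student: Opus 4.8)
The plan is to recognize that Proposition~\ref{prop-6.4} is nothing more than the evaluation of the reflected-trajectory formulas (\ref{eq:6-3}) at the initial instant $s=0$, read under the convention that an unsubscripted pair $(\gamma,c)$ denotes the initial value $(\gamma_0,c_0)$ of a pendulum trajectory (and $(\gamma^i,c^i)$ the initial value $(\gamma^i_0,c^i_0)$ of its reflection). By Proposition~\ref{prop-6.1} each reflection $\varepsilon^i$ carries the pendulum trajectory $\delta=\{(\gamma_s,c_s)\}$ to another genuine trajectory $\delta^i=\{(\gamma_s^i,c_s^i)\}$ of the system (\ref{eq:4-10}); since a solution of the pendulum equation is uniquely determined by its value at $s=0$, the point $\nu^i=(\gamma^i,c^i,t)\in N$ is obtained precisely by reading off $(\gamma^i,c^i)=(\gamma_0^i,c_0^i)$ from (\ref{eq:6-3}).

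First I would split the seven reflections into the two families already distinguished after (\ref{eq:6-1}). For the time-preserving reflections $\varepsilon^3,\varepsilon^4,\varepsilon^7$ the formulas in (\ref{eq:6-3}) involve $(\gamma_s,c_s)$ directly, so substituting $s=0$ replaces $(\gamma_s,c_s)$ by the initial data $(\gamma_0,c_0)=(\gamma,c)$; for instance $\delta^3$ yields $(\gamma^3,c^3)=(-\gamma,-c)$, and $\varepsilon^4,\varepsilon^7$ are handled identically. For the time-reversing reflections $\varepsilon^1,\varepsilon^2,\varepsilon^5,\varepsilon^6$ the formulas instead involve $(\gamma_{t-s},c_{t-s})$, so the substitution $s=0$ picks out the terminal data $(\gamma_t,c_t)$ of the original trajectory; for instance $\delta^1$ yields $(\gamma^1,c^1)=(\gamma_t,-c_t)$, and the remaining three follow the same pattern, reproducing all of (\ref{eq:6-7}).

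Since the computation reduces to a single substitution, there is no genuine analytic obstacle; the only point requiring care is the bookkeeping of the notational convention together with which reflections reverse time. Concretely, I would emphasize that for the time-reversing family the \emph{initial} covector of the reflected extremal is governed by the \emph{terminal} data $(\gamma_t,c_t)$ of the original extremal — this is exactly the content of the $t-s$ argument in (\ref{eq:6-3}) — whereas the time-preserving family retains the initial data $(\gamma,c)$. That these prescribed initial values, when evolved forward under (\ref{eq:4-10}), regenerate the trajectories $\delta^i$ is then automatic from Proposition~\ref{prop-6.1} and uniqueness of solutions of the pendulum equation, so no further verification is needed.
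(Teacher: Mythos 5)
Your proposal is correct and follows exactly the paper's own proof, which simply substitutes $s=0$ into the reflected-trajectory formulas (\ref{eq:6-3}) of Proposition~\ref{prop-6.1}. Your additional care in separating the time-preserving reflections ($\varepsilon^3,\varepsilon^4,\varepsilon^7$) from the time-reversing ones ($\varepsilon^1,\varepsilon^2,\varepsilon^5,\varepsilon^6$), and in invoking uniqueness of pendulum solutions, only makes explicit what the paper leaves implicit.
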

\begin{proof}
Substitute $s=0$ in Proposition \ref{prop-6.1}. \hfill$\square$ 
\end{proof}
Equations (\ref{eq:6-6}) give the explicit formulas for reflection of endpoints of the extremal trajectories in the image of exponential map, whereas, equations (\ref{eq:6-7}) give explicit formulas for the action of reflections $\varepsilon^{i}$ on the initial points of the extremals in the preimage of the exponential mapping. The actions in $M$ and $N$ are both induced by reflections $\varepsilon^{i}$ on extremals. Therefore it follows that the reflections  $\varepsilon^{i}$ for $i=1, \ldots ,7$, are symmetries of the exponential map.
\begin{proposition}
\textbf{\label{prop-6.5}} \textit{For any $\nu\in N$ and $i=1,\ldots,7$, we have }
$\varepsilon^{i} \circ Exp(\nu) = Exp \circ \varepsilon^{i}(\nu)$.
\end{proposition}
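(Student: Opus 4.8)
The plan is to read the desired identity $\varepsilon^{i}\circ Exp(\nu)=Exp\circ\varepsilon^{i}(\nu)$ as the commutativity of a square whose two legs have already been computed, and then simply chain the preceding propositions. Fix $\nu=(\lambda,t)=(\gamma,c,t)\in N$ and set $q_{t}=Exp(\nu)$, the endpoint of the normal extremal issuing from $\lambda$. The left-hand side is immediate: by Proposition \ref{prop:6.3}, $\varepsilon^{i}(Exp(\nu))=\varepsilon^{i}(q_{t})=q_{t}^{i}$, the reflected endpoint given by (\ref{eq:6-6}). It therefore remains to show that the right-hand side $Exp(\varepsilon^{i}(\nu))$ equals this same point $q_{t}^{i}$.

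The key step is to recognize that the reflected curve $\{q_{s}^{i}\mid s\in[0,t]\}$ produced in Proposition \ref{prop:6.2} is itself a normal extremal based at the identity. Indeed, by Proposition \ref{prop-6.1} the reflected pendulum curve $(\gamma_{s}^{i},c_{s}^{i})$ is again a solution of (\ref{eq:4-10}), and by the very construction in the proof of Proposition \ref{prop:6.2} the triple $q_{s}^{i}=(x_{s}^{i},y_{s}^{i},z_{s}^{i})$ is the solution of the horizontal subsystem (\ref{eq:5-1}) driven by this reflected vertical motion; evaluating the formulas of Proposition \ref{prop:6.2} at $s=0$ shows $q_{0}^{i}=Id$ for each $i$. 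Hence $\{(\gamma_{s}^{i},c_{s}^{i},q_{s}^{i})\}$ is a genuine trajectory of the full Hamiltonian system (\ref{eq:4-10}),(\ref{eq:5-1}) starting at the identity, i.e. $q_{s}^{i}=\pi\circ e^{s\overrightarrow{H}}(\lambda^{i})=Exp(\lambda^{i},s)$, where $\lambda^{i}$ is the initial covector of the reflected extremal.

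It then remains only to identify $\lambda^{i}$ with $\varepsilon^{i}(\lambda)$. This is exactly Proposition \ref{prop-6.4}: the initial pendulum data of the reflected trajectory is $(\gamma_{0}^{i},c_{0}^{i})=(\gamma^{i},c^{i})$ as listed in (\ref{eq:6-7}), and since the base point is $Id$ this determines $\lambda^{i}$ as the reflected initial covector. Evaluating $q_{s}^{i}=Exp(\lambda^{i},s)$ at $s=t$ gives $q_{t}^{i}=Exp(\lambda^{i},t)=Exp(\nu^{i})=Exp(\varepsilon^{i}(\nu))$. Combined with $\varepsilon^{i}(Exp(\nu))=q_{t}^{i}$ from the first paragraph, this yields $\varepsilon^{i}\circ Exp(\nu)=Exp\circ\varepsilon^{i}(\nu)$, which is the claim, and the argument runs uniformly over $i=1,\dots,7$.

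The only genuine content, and hence the step I expect to require the most care, is the middle one: verifying that reflection transports an extremal to an extremal governed by the same Hamiltonian flow, so that $\varepsilon^{i}$ commutes with $e^{s\overrightarrow{H}}$ at the level of whole trajectories rather than merely endpoints. This rests on the invariance of the pendulum (\ref{eq:4-10}) under the $\varepsilon^{i}$ (Proposition \ref{prop-6.1}) together with the faithful transport of that symmetry to the horizontal variables (Proposition \ref{prop:6.2}); one must also check that $\varepsilon^{i}$ preserves $N$ and that the degenerate strata $C_{4},C_{5}$, where elliptic coordinates are unavailable, are covered. Since the whole argument is coordinate-free, these strata present no additional difficulty.
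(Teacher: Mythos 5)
Your proof is correct and follows essentially the same route as the paper: the paper's own justification is precisely the observation that the actions of $\varepsilon^{i}$ on $M$ (Proposition \ref{prop:6.3}) and on $N$ (Proposition \ref{prop-6.4}) are both induced by the single action of $\varepsilon^{i}$ on whole extremal trajectories (Propositions \ref{prop-6.1}, \ref{prop:6.2}), so that $Exp$ intertwines them. You have merely written out this chaining explicitly --- including the key middle step that the reflected curve is again an extremal from the identity with initial covector $\varepsilon^{i}(\lambda)$ --- which is a more careful rendering of the argument the paper leaves informal.
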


\section{Maxwell Strata Corresponding to the Reflections\label{sec:Maxwell-Strata}}

\subsection{Maxwell Points and Maxwell Sets}

The theory of Maxwell points is well known see e.g. \cite{max_sre}, \cite{Ardentov}, \cite{Sachkov_Dido_Symmetries},\cite{Sachkov-Dido-Max}. In optimal control theory they signify the points where the competing extremal trajectories with the same cost functional cross each other. S. Jacquet proved that for an analytic problem, a trajectory cannot be optimal after a Maxwell point \cite{Jacquet_Maxwell_Point}. Hence, they are important in the study of optimal trajectories. The set of all Maxwell points is called Maxwell Set. Let us consider a Maxwell set $MAX^{i},i=1,\ldots,7$ resulting from reflections $\varepsilon^{i}$:
\begin{equation}
MAX^{i}=\left\{ \text{\ensuremath{\nu}}=(\text{\ensuremath{\lambda}},t)\text{\ensuremath{\in}}N\quad|\quad\lambda\neq\lambda{}^{i},\quad Exp(\lambda,t)=Exp(\lambda^{i},t)\right\} .\label{eq:7-1}
\end{equation}
The corresponding Maxwell set in the image of the exponential mapping is defined as:
\[
Max^{i}=Exp(MAX^{i})\subset M.
\]
If $\nu=(\lambda,t) \in MAX^{i}$, then $q_t=Exp(\nu) \in Max^{i}$ is a Maxwell point along the trajectory $q_s=Exp(\lambda,s)$. Here we use the fact that if $\lambda \neq \lambda^i$, then $Exp(\lambda,s) \not\equiv Exp(\lambda^i,s)$.

\subsection{Fixed Points of Reflections in the Image of Exponential Map}

Since there are discrete symmetries of the exponential mapping, the idea is to exploit these symmetries and find the points where the
trajectories arising out of symmetries meet the normal extremal trajectory $q=(x,y,z)$. These points form the Maxwell set corresponding
to the reflection symmetries. Consider the following functions: 

\begin{equation}
R_{1}=y\cosh\frac{z}{2}-x\sinh\frac{z}{2},\qquad R_{2}=x\cosh\frac{z}{2}-y\sinh\frac{z}{2}.\label{eq:7-2}
\end{equation}
Consider $x,y$ in hyperbolic coordinates: 
\begin{equation*}
x=\rho\cosh\chi,\quad y=\rho\sinh\chi.
\end{equation*}
Thus $R_{1}$ and $R_{2}$ read as: 
\begin{align*}
R_{1} & =\rho\sinh\chi\cosh\frac{z}{2}-\rho\cosh\chi\sinh\frac{z}{2}=\rho\sinh(\chi-\frac{z}{2}),\\
R_{2} & =\rho\cosh\chi\cosh\frac{z}{2}-\rho\sinh\chi\sinh\frac{z}{2}=\rho\cosh(\chi-\frac{z}{2}).
\end{align*}

\begin{proposition}
\textbf{\label{prop-7.1}} Fixed points of the reflections $\varepsilon^{i}:q \mapsto q^{i}$ are given by the following conditions: 
\begin{flalign*}
(1)\qquad q^{1}=q\Longleftrightarrow & R_{1}(q)=0,\\
(2)\qquad q^{2}=q\Longleftrightarrow & z=0,\\
(3)\qquad q^{3}=q\Longleftrightarrow & y=0,\, z=0,\\
(4)\qquad q^{4}=q\Longleftrightarrow & x=0,\, z=0,\\
(5)\qquad q^{5}=q\Longleftrightarrow & x=y=z=0,\\
(6)\qquad q^{6}=q\Longleftrightarrow & R_{2}(q)=0,\\
(7)\qquad q^{7}=q\Longleftrightarrow & x=0,\, y=0. & {}
\end{flalign*}
\end{proposition}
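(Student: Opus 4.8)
The plan is to verify each of the seven equivalences directly from the explicit reflection formulas~(\ref{eq:6-6}) of Proposition~\ref{prop:6.3}. For a fixed $i$ I would write the fixed-point condition $q^{i}=q$ as the system of three scalar equations obtained by equating the $x$-, $y$- and $z$-components of $q^{i}=(x^{i},y^{i},z^{i})$ with those of $q=(x,y,z)$, and then reduce this system to the stated condition. Both directions of each equivalence have to be checked, but sufficiency is transparent once the necessary conditions are isolated.

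The cases $i=2,3,4,5,7$ are immediate. In each the $z$-component of~(\ref{eq:6-6}) equals $z$ or $-z$; when it is $-z$ the third equation forces $z=0$, and when it is $z$ it imposes nothing. Substituting the resulting value of $z$ into the $x$- and $y$-equations, which read $\pm x=x$ and $\pm y=y$, yields the remaining sign constraints. This gives $z=0$ for $i=2$; $y=z=0$ for $i=3$; $x=z=0$ for $i=4$; $x=y=z=0$ for $i=5$; and $x=y=0$ for $i=7$, exactly as claimed. Note that for $i=5$ the $x$- and $y$-equations do add genuine constraints after $z=0$ is imposed, whereas for $i=2$ they become vacuous, so one must track which equations are active in each case.

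The two substantive cases are $i=1$ and $i=6$, where the functions $R_{1},R_{2}$ of~(\ref{eq:7-2}) appear. Here I would use the half-angle identities $\cosh z-1=2\sinh^{2}\tfrac{z}{2}$, $\cosh z+1=2\cosh^{2}\tfrac{z}{2}$ and $\sinh z=2\sinh\tfrac{z}{2}\cosh\tfrac{z}{2}$ to factor each coordinate equation. For $i=1$ the $z$-equation is automatic, while the $x$- and $y$-equations become $-2\sinh\tfrac{z}{2}\,R_{1}=0$ and $-2\cosh\tfrac{z}{2}\,R_{1}=0$; for $i=6$ they become $2\cosh\tfrac{z}{2}\,R_{2}=0$ and $-2\sinh\tfrac{z}{2}\,R_{2}=0$. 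The main, and essentially only, subtlety is to read the condition off the equation carrying the factor $\cosh\tfrac{z}{2}$, which never vanishes, so that it is equivalent to $R_{1}=0$ (resp. $R_{2}=0$), and then to confirm that the companion equation, carrying the possibly-vanishing factor $\sinh\tfrac{z}{2}$, is automatically satisfied. The hyperbolic coordinates $x=\rho\cosh\chi$, $y=\rho\sinh\chi$, in which $R_{1}=\rho\sinh(\chi-\tfrac{z}{2})$ and $R_{2}=\rho\cosh(\chi-\tfrac{z}{2})$, furnish an independent check: $R_{1}=0$ is the hyperbolic-angle condition $\chi=\tfrac{z}{2}$, and substituting $x=\rho\cosh\tfrac{z}{2}$, $y=\rho\sinh\tfrac{z}{2}$ into the $i=1$ formulas recovers $q^{1}=q$ through $\cosh(\tfrac{z}{2}-z)=\cosh\tfrac{z}{2}$ and $\sinh(z-\tfrac{z}{2})=\sinh\tfrac{z}{2}$.
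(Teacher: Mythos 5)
Your proposal is correct and takes essentially the same route as the paper: equate the components of $q^{i}=q$ using (\ref{eq:6-6}), and in the nontrivial cases factor the $x$- and $y$-equations via half-angle identities into $\sinh\frac{z}{2}\,R_{j}=0$ and $\cosh\frac{z}{2}\,R_{j}=0$, with the non-vanishing of $\cosh\frac{z}{2}$ forcing $R_{j}=0$. The only difference is one of completeness: the paper works out case (1) and declares the remaining cases similar, while you treat all seven cases explicitly (and add a harmless consistency check in hyperbolic coordinates).
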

\begin{proof}
We prove only Case (1): $q^{1}=q$. The proof for all other cases is similar. From (\ref{eq:6-6}), $x^{1}=x$ is equivalent to:
\begin{equation*}
x\cosh z-y\sinh z =x,\nonumber
\end{equation*}
which is equivalent to, 
\begin{align}
R_{1}\sinh\frac{z}{2} & =0.\label{eq:7-3}
\end{align}
Similarly, $y^{1}=y$ is equivalent to:
\begin{align}
R_{1}\cosh\frac{z}{2} & =0.\label{eq:7-4}
\end{align}

Eqs (\ref{eq:7-3}),(\ref{eq:7-4}) imply that $R_{1}=0.$ Hence case (1) of Proposition \ref{prop-7.1} is proved. \hfill$\square$

It can be observed readily that $Max^{i}$ for $i=3,4,5,7$ form 0 or 1 dimensional manifolds contained in 2-dimensional manifolds formed by $Max^{i}$
for $i=1,2,6$. Thus we consider only the 2-dimensional Maxwell sets. 
\end{proof}
\subsection{Fixed Points of the Reflections in the Preimage of the Exponential Map}

In the previous section we considered the fixed points of reflections in $M$ directly characterizing the Maxwell sets containing points where $q=q^{i}.$ We now consider the fixed points in the preimage of the exponential map, i.e., the solutions  to the equations $\lambda=\lambda^{i}$ for the proper characterization of the Maxwell points. We use the following coordinates in the phase cylinder of the pendulum for further analysis:
\begin{eqnarray}
\tau & = & \frac{1}{2}\left(\varphi_{t}+\varphi\right),\, p=\frac{t}{2}\textrm{ when }\nu=(\lambda,t)\in N_{1} \cup N_{3},\nonumber \\
\tau & = & \frac{1}{2k}\left(\varphi_{t}+\varphi\right),\, p=\frac{t}{2k}\textrm{ when }\nu=(\lambda,t)\in N_{2}.\label{eq:7-5}
\end{eqnarray}

\begin{proposition}
\textbf{\label{prop-7.2}} Fixed points of the reflections $\varepsilon^{i},\, i=1,2,6,$ in the preimage
of the exponential map are given as:
\begin{flalign*}
(1)\qquad\lambda^{1}=\lambda & \Leftrightarrow \begin{array}{cc}
\mathrm{cn}\tau=0, & \lambda\in C_{1},
\end{array}  & {}\\
(2)\qquad\lambda^{2}=\lambda & \Leftrightarrow\left\{ \begin{array}{cc}
\mathrm{sn}\tau=0, & \lambda\in C_{1}\cup C_{2}\\
\tau=0, & \lambda\in C_{3}
\end{array}\right\},  & {}\\
(3)\qquad\lambda^{6}=\lambda & \Leftrightarrow \begin{array}{cc}
\mathrm{cn}\tau=0, & \lambda\in C_{2}.
\end{array}  & {}
\end{flalign*}
\end{proposition}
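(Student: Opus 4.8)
The plan is to translate each fixed-point equation $\lambda^i=\lambda$ from the phase cylinder into an identity between Jacobi elliptic functions at the terminal and initial times, and then collapse that identity using the addition theorems. First I would invoke Proposition~\ref{prop-6.4}: since the initial covector is $\lambda=(\gamma,c)=(\gamma_0,c_0)$ while $\lambda^1=(\gamma_t,-c_t)$, $\lambda^2=(-\gamma_t,c_t)$ and $\lambda^6=(-\gamma_t+2\pi,c_t)$, the equalities $\lambda^i=\lambda$ read
\begin{align*}
i=1:&\quad \gamma_t=\gamma_0,\ c_t=-c_0,\\
i=2:&\quad \gamma_t=-\gamma_0,\ c_t=c_0,\\
i=6:&\quad \gamma_t=2\pi-\gamma_0,\ c_t=c_0,
\end{align*}
where the $\gamma$-equalities are understood modulo $4\pi$. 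Because $\gamma/2\in\mathbb{R}/2\pi\mathbb{Z}$, each such equality is equivalent to the pair of equalities for $\sin(\gamma/2)$ and $\cos(\gamma/2)$, which is exactly the data the elliptic parametrization of Section~5 supplies.

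Next I would substitute the case-by-case parametrizations, so that the sign factors $s_1,s_2$ cancel. In $C_1$, using (\ref{eq:5-3})--(\ref{eq:5-5}), the case $i=1$ becomes $\mathrm{sn}\,\varphi_t=\mathrm{sn}\,\varphi$, $\mathrm{dn}\,\varphi_t=\mathrm{dn}\,\varphi$, $\mathrm{cn}\,\varphi_t=-\mathrm{cn}\,\varphi$, while $i=2$ becomes $\mathrm{sn}\,\varphi_t=-\mathrm{sn}\,\varphi$, $\mathrm{dn}\,\varphi_t=\mathrm{dn}\,\varphi$, $\mathrm{cn}\,\varphi_t=\mathrm{cn}\,\varphi$. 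In $C_2$ the same manipulation with (\ref{eq:5-8})--(\ref{eq:5-10}) and $\psi=\varphi/k$ produces the identical triples in $\mathrm{sn},\mathrm{cn},\mathrm{dn}$ evaluated at $\psi_t,\psi$; in particular $i=6$ yields the $i=1$-type triple $\mathrm{sn}\,\psi_t=\mathrm{sn}\,\psi$, $\mathrm{cn}\,\psi_t=-\mathrm{cn}\,\psi$, $\mathrm{dn}\,\psi_t=\mathrm{dn}\,\psi$. In $C_3$ ($k=1$) the functions degenerate to $\tanh$ and $\mathrm{sech}$ via (\ref{eq:5-12})--(\ref{eq:5-14}).

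The main computation is then the change of variables to $\tau$ and $p$ from (\ref{eq:7-5}). Since $\varphi_t=t+\varphi$, one has $p=\tfrac12(\varphi_t-\varphi)$ and $\tau=\tfrac12(\varphi_t+\varphi)$ (with $\psi$ in place of $\varphi$ in $C_2$), so I would write $\varphi_t=\tau+p$, $\varphi=\tau-p$ and apply the addition formulas in sum-to-product form, e.g.
\[
\mathrm{sn}(\tau+p)-\mathrm{sn}(\tau-p)=\frac{2\,\mathrm{sn}\,p\,\mathrm{cn}\,\tau\,\mathrm{dn}\,\tau}{1-k^2\mathrm{sn}^2\tau\,\mathrm{sn}^2 p},\qquad \mathrm{cn}(\tau+p)+\mathrm{cn}(\tau-p)=\frac{2\,\mathrm{cn}\,\tau\,\mathrm{cn}\,p}{1-k^2\mathrm{sn}^2\tau\,\mathrm{sn}^2 p},
\]
together with the analogous expressions for $\mathrm{dn}$ and for the sums and differences of opposite sign. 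For $i=1$ every one of the three conditions reduces to a product containing the factor $\mathrm{cn}\,\tau$; since $\mathrm{dn}$ never vanishes for $k\in(0,1)$ and $p=t/2\neq0$ keeps $\mathrm{sn}\,p\neq0$ in the relevant range, they collapse to $\mathrm{cn}\,\tau=0$, and conversely $\mathrm{cn}\,\tau=0$ makes each product vanish, giving the stated equivalence. The $i=2$ conditions collapse to $\mathrm{sn}\,\tau=0$ on $C_1\cup C_2$ by the mirror-image identities; in $C_3$ the single surviving relation $\cosh\varphi_t=\cosh\varphi$ forces $\varphi_t=-\varphi$, i.e.\ $\tau=0$; and $i=6$ on $C_2$ reproduces the $i=1$ computation, yielding $\mathrm{cn}\,\tau=0$.

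The hard part will be bookkeeping rather than any single identity: one must track the sign factors $s_1,s_2$ and the two distinct time reparametrizations across $C_1$, $C_2$, $C_3$, read each $\gamma$-equality modulo $4\pi$ correctly as a matched pair of $\sin/\cos$ conditions, and---most delicately---justify dividing out the nonvanishing factors $\mathrm{dn}$ and $\mathrm{sn}\,p$ so that no spurious fixed points are introduced and the equivalence is genuinely two-sided.
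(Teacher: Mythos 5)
Your core strategy is the same as the paper's: reduce $\lambda^{i}=\lambda$ via Proposition \ref{prop-6.4} to equalities between $(\gamma_t,c_t)$ and $(\gamma,c)$, translate these through the elliptic parametrizations of Section 5 into equalities of $\mathrm{sn},\mathrm{cn},\mathrm{dn}$ at $\varphi_t$ and $\varphi$ (for $i=1$ in $C_1$: $\mathrm{sn}\,\varphi_t=\mathrm{sn}\,\varphi$, $\mathrm{dn}\,\varphi_t=\mathrm{dn}\,\varphi$, $\mathrm{cn}\,\varphi_t=-\mathrm{cn}\,\varphi$, exactly as in the paper), and then conclude by elliptic identities. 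You differ only in the last step: the paper uses the half-argument formula $\mathrm{cn}\,\tau=\pm\sqrt{\left(\mathrm{cn}(\varphi_t+\varphi)+\mathrm{dn}(\varphi_t+\varphi)\right)/\left(1+\mathrm{dn}(\varphi_t+\varphi)\right)}$ and shows by the addition theorems that the numerator vanishes, whereas you use sum-to-product identities in the variables $(\tau,p)$. Your variant is sound and in one respect cleaner: it makes the converse implication ($\mathrm{cn}\,\tau=0\Rightarrow\lambda^{1}=\lambda$) immediate, a direction the paper's computation does not spell out.

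There are, however, two gaps. First, the proposition is an equivalence over all of $C_1\cup C_2\cup C_3$, so it also asserts \emph{non-existence}: $\varepsilon^{1}$ has no fixed points on $C_2\cup C_3$, and $\varepsilon^{6}$ has none on $C_1\cup C_3$. Your proposal never addresses this. The paper handles it by a sign argument: on $C_2$ and $C_3$ the coordinate $c$ never vanishes and keeps constant sign along a trajectory, while $\varepsilon^{1}$ sends $c\mapsto -c_t$, so $\lambda\in C_2^{\pm}\Rightarrow\lambda^{1}\in C_2^{\mp}$ and likewise on $C_3$; an analogous component-swapping argument excludes $\varepsilon^{6}$ on $C_1$. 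In your framework the same conclusion drops out because the relevant equality would read $\mathrm{dn}\,\psi_t=-\mathrm{dn}\,\psi$ (or $1/\cosh\varphi_t=-1/\cosh\varphi$), impossible since $\mathrm{dn}>0$ --- but this has to be stated, since it is part of the claim. Second, your assertion that $p=t/2\neq 0$ keeps $\mathrm{sn}\,p\neq 0$ is false: $t$ ranges over all of $\mathbb{R}^{+}$, and $\mathrm{sn}\,p=0$ whenever $t=4nK(k)$, a full pendulum period. The conclusion survives, but by combining conditions rather than dividing: if $\mathrm{cn}\,\tau\neq 0$, then vanishing of $\mathrm{sn}\,p\,\mathrm{cn}\,\tau\,\mathrm{dn}\,\tau$ forces $\mathrm{sn}\,p=0$, while vanishing of $\mathrm{cn}\,\tau\,\mathrm{cn}\,p$ forces $\mathrm{cn}\,p=0$, and $\mathrm{sn}\,p=\mathrm{cn}\,p=0$ is impossible; hence $\mathrm{cn}\,\tau=0$. (Equivalently, treat $\mathrm{sn}\,p=0$ separately: then $\lambda_t=\lambda$, the fixed-point condition becomes $c=-c$, i.e.\ $\mathrm{cn}\,\varphi=0$, which again yields $\mathrm{cn}\,\tau=0$.) With these two repairs your argument is a complete and correct proof.
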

\begin{proof}
\textbf{Case 1 - $\lambda^{1}=\lambda$}. It follows from Proposition \ref{prop-6.4} that if $\lambda\in C_{1}$, then $\lambda^{i}\in C_{1}$. Using Proposition
\ref{prop-6.4},
\begin{equation}
\lambda^{1}=\lambda\Longleftrightarrow\gamma_{t}=\gamma,\qquad-c_{t}=c.
\end{equation}
Using elliptic coordinates (\ref{eq:5-2})-(\ref{eq:5-5}) we have: 
\begin{equation}
\sin\frac{\gamma}{2}=s_{1}k\,\mathrm{sn}\varphi\implies\sin\frac{\gamma_{t}}{2}=s_{1}k\,\mathrm{sn}\varphi_{t}\implies\sin\frac{\gamma}{2}=s_{1}k\,\mathrm{sn}\varphi_{t}\implies\mathrm{sn}\varphi_{t}=\mathrm{sn}\varphi.\label{eq:7-6}
\end{equation}
\begin{equation}
\cos\frac{\gamma}{2}=s_{1}\mathrm{dn}\varphi\implies\cos\frac{\gamma_{t}}{2}=s_{1}\mathrm{dn}\varphi_{t}\implies\cos\frac{\gamma}{2}=s_{1}\mathrm{dn}\varphi_{t}\implies\mathrm{dn}\varphi_{t}=\mathrm{dn}\varphi.\label{eq:7-7}
\end{equation}
\begin{equation}
\frac{c}{2}=k\,\mathrm{cn}\varphi\implies\frac{c_{t}}{2}=k\,\mathrm{cn}\varphi_{t}\implies\frac{-c}{2}=k\,\mathrm{cn}\varphi_{t}\implies\mathrm{cn}\varphi_{t}=-\mathrm{cn}\varphi.\label{eq:7-8}
\end{equation}
Now from \cite{Table_Int},
\[
\mathrm{cn}\tau=\mathrm{cn}\frac{\varphi_{t}+\varphi}{2}=\pm\sqrt{\frac{\mathrm{cn}(\varphi_{t}+\varphi)+\mathrm{dn}(\varphi_{t}+\varphi)}{1+\mathrm{dn}(\varphi_{t}+\varphi)}},
\]
Consider $\mathrm{cn}(\varphi_{t}+\varphi)+\mathrm{dn}(\varphi_{t}+\varphi)$,
\[
\mathrm{cn}(\varphi_{t}+\varphi)+\mathrm{dn}(\varphi_{t}+\varphi)=\frac{\mathrm{cn}\varphi_{t}\mathrm{cn}\varphi-\mathrm{sn}\varphi_{t}\mathrm{sn}\varphi\mathrm{dn}\varphi_{t}\mathrm{dn}\varphi}{1-k^{2}\mathrm{sn}^{2}\varphi_{t}\mathrm{sn}^{2}\varphi}+\frac{\mathrm{dn}\varphi_{t}\mathrm{dn}\varphi+k^{2}\mathrm{sn}\varphi_{t}\mathrm{sn}\varphi\mathrm{cn}\varphi_{t}\mathrm{cn}\varphi}{1-k^{2}\mathrm{sn}^{2}\varphi_{t}\mathrm{sn}^{2}\varphi},
\]
Using (\ref{eq:7-6})-(\ref{eq:7-8}):
\begin{eqnarray}
\mathrm{cn}(\varphi_{t}+\varphi)+\mathrm{dn}(\varphi_{t}+\varphi) & = & \frac{-\mathrm{cn}^{2}\varphi-\mathrm{sn}^{2}\varphi\mathrm{dn}^{2}\varphi+\mathrm{dn}^{2}\varphi+k^{2}\mathrm{sn}^{2}\varphi\mathrm{cn}^{2}\varphi}{1-k^{2}\mathrm{sn}^{2}\varphi_{t}\mathrm{sn}^{2}\varphi},\nonumber \\
 & = & \frac{-\mathrm{cn}^{2}\varphi+\mathrm{dn}^{2}\varphi\left(1-\mathrm{sn}^{2}\varphi\right)+\left(1-\mathrm{dn}^{2}\varphi\right)\mathrm{cn}^{2}\varphi}{1-k^{2}\mathrm{sn}^{2}\varphi_{t}\mathrm{sn}^{2}\varphi},\nonumber \\
 & = & \frac{-\left(1-\mathrm{dn}^{2}\varphi\right)\mathrm{cn}^{2}\varphi+\left(1-\mathrm{dn}^{2}\varphi\right)\mathrm{cn}^{2}\varphi}{1-k^{2}\mathrm{sn}^{2}\varphi_{t}\mathrm{sn}^{2}\varphi},\nonumber \\
\implies\mathrm{cn}\tau & = & 0.
\end{eqnarray}
For $\lambda\in C_{2}^{\pm}$, we have $\lambda^{1}\in C_{2}^{\mp}$ because $c$ inverses sign. Thus $\lambda=\lambda^{1}$ is impossible. Similarly
if $\lambda\in C_{3}^{i\pm}$, we have $\lambda^{1}\in C_{3}^{i\mp},i=0,1$ because $c$ and $\gamma$ are both inverted in sign. Hence $\lambda=\lambda^{1}$ is impossible.

The proof for all other cases is similar.\hfill$\square$
\end{proof}

\subsection{General Description of Maxwell Strata Generated by Reflections}

Propositions \ref{prop-7.1} and \ref{prop-7.2} give the multiple points in the image and fixed points in the preimage of
the exponential map respectively. We now collate the results from these propositions to give general conditions under which points $q\in M$
form part of the Maxwell sets. 
\begin{proposition}
\textbf{\label{prop-7.3}} For $\nu=(\lambda,t)\in\cup_{i=1}^{3}N_{i}$ 
and $q=\left(x,y,z\right)=Exp\left(\nu\right)$,
\begin{flalign*}
(1)\qquad\nu\in MAX^{1} & \Leftrightarrow\left\{ \begin{array}{ccc}
R_{1}(q)=0, & \mathrm{cn}\tau\neq0 & for\,\lambda\in C_{1},\\
R_{1}(q)=0, &  & for\,\lambda\in C_{2}\cup C_{3}.
\end{array}\right\}  & {}\\
(2)\qquad\nu\in MAX^{2} & \Leftrightarrow\left\{ \begin{array}{ccc}
z=0, & \mathrm{sn}\tau \neq 0 & \lambda\in C_{1}\cup C_{2},\\
z=0, & \tau \neq 0  & \lambda\in C_{3}.
\end{array}\right\}  & {}\\
(3)\qquad\nu\in MAX^{6} & \Leftrightarrow\left\{ \begin{array}{ccc}
R_{2}(q)=0, & \mathrm{cn}\tau\neq0 & \lambda\in C_{2},\\
R_{2}(q)=0, &  & \lambda\in C_{1}\cup C_{3}.
\end{array}\right\}  & {}
\end{flalign*}
\end{proposition}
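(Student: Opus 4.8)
The plan is to assemble the three preceding propositions into the stated characterization, with Proposition~\ref{prop-6.5} serving as the bridge between the preimage and the image of the exponential map. The starting point is the definition (\ref{eq:7-1}): $\nu=(\lambda,t)\in MAX^{i}$ holds exactly when $\lambda\neq\lambda^{i}$ and $Exp(\lambda,t)=Exp(\lambda^{i},t)$. First I would rewrite the coincidence condition entirely in the image $M$. Since $\varepsilon^{i}(\nu)=(\lambda^{i},t)$, Proposition~\ref{prop-6.5} yields $Exp(\lambda^{i},t)=Exp(\varepsilon^{i}(\nu))=\varepsilon^{i}(Exp(\nu))=q^{i}$, where $q^{i}$ is the reflected endpoint from Proposition~\ref{prop:6.3}. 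Hence $Exp(\lambda,t)=Exp(\lambda^{i},t)$ is equivalent to $q=q^{i}$, and the entire problem splits into an ``equality in the image'' half and a ``non-triviality in the preimage'' half.

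Next I would substitute the endpoint conditions of Proposition~\ref{prop-7.1}, namely $q=q^{1}\Leftrightarrow R_{1}(q)=0$, $q=q^{2}\Leftrightarrow z=0$, and $q=q^{6}\Leftrightarrow R_{2}(q)=0$. This disposes of the image half and already produces the geometric constraints $R_{1}(q)=0$, $z=0$, $R_{2}(q)=0$ that appear in the three cases. The remaining half, the clause $\lambda\neq\lambda^{i}$, I would translate via Proposition~\ref{prop-7.2}. For $i=1$ on $C_{1}$ that proposition gives $\lambda^{1}=\lambda\Leftrightarrow\mathrm{cn}\,\tau=0$, so $\lambda\neq\lambda^{1}$ becomes $\mathrm{cn}\,\tau\neq0$, reproducing the first line of case~(1); likewise for $i=2$ one obtains $\mathrm{sn}\,\tau\neq0$ on $C_{1}\cup C_{2}$ and $\tau\neq0$ on $C_{3}$, and for $i=6$ one obtains $\mathrm{cn}\,\tau\neq0$ on $C_{2}$. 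Conjoining the image condition with the non-triviality condition then yields the stated equivalences on these strata.

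It remains to treat the complementary strata, where the non-triviality clause must either simplify or become vacuous. For $\lambda\in C_{2}\cup C_{3}$ under $\varepsilon^{1}$, and symmetrically for $\lambda\in C_{1}\cup C_{3}$ under $\varepsilon^{6}$, I would argue as in the proof of Proposition~\ref{prop-7.2} that $\lambda^{i}=\lambda$ is \emph{impossible}: the reflection inverts the sign of $c$ (and of $\gamma$ on $C_{3}$), so no fixed point can occur while the elliptic coordinate $\varphi$ ranges over its fundamental domain. Consequently $\lambda\neq\lambda^{i}$ holds automatically and drops out, leaving only $R_{1}(q)=0$ (respectively $R_{2}(q)=0$) as in the second lines of cases~(1) and~(3). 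I expect these vacuity checks to be the one genuinely delicate point, since for $\varepsilon^{6}$ on $C_{1}\cup C_{3}$ the relevant non-fixed-point statement is not among the cases explicitly recorded in Proposition~\ref{prop-7.2} and must be supplied by the same sign-reversal computation applied to the elliptic coordinates of Cases~1 and~3. Once these are in place, the three equivalences follow by combining the two halves conjunctively on each stratum.
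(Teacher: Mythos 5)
Your proposal is correct and follows essentially the same route as the paper, whose entire proof is the instruction to apply Propositions \ref{prop-7.1} and \ref{prop-7.2}; you simply spell out the details that application entails, with Proposition \ref{prop-6.5} as the bridge between preimage and image. Your additional observation---that the vacuity of the fixed-point condition for $\varepsilon^{6}$ on $C_{1}\cup C_{3}$ (and for $\varepsilon^{1}$ on $C_{2}\cup C_{3}$) is not explicitly recorded in Proposition \ref{prop-7.2} and must be supplied by the same sign-reversal argument used in its proof---is exactly the point the paper leaves implicit.
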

\begin{proof}
Apply Propositions \ref{prop-7.1} and \ref{prop-7.2}.
\end{proof}
\section{Future Work\label{sec:Future-Work}}

The most natural extension of this work is the complete description of Maxwell strata and computation of the conjugate and cut loci. To this end the methods developed in \cite{max_sre},\cite{cut_sre1}, \cite{Sachkov-Dido-Max}, \cite{Conj_Euler} appear most relevant and shall be employed. Complete description of Maxwell strata entails computation of roots of the functions $R_{i}(q)$ (\ref{eq:7-2}). This shall also give the global bound on the cut time for sub-Riemannian problem on $\mathrm{SH(2)}$. Description of the global structure of exponential map and optimal synthesis is the ultimate goal to be addressed in the entire research on SH(2).

\section{Conclusion}

The group of motions of the pseudo Euclidean plane $\mathrm{SH(2)}$ as an abstract algebraic structure has its own significance and sub-Riemannian problem on $\mathrm{SH(2)}$ is important in the entire program of study of 3-dimensional Lie groups. In this paper we have obtained the complete parametrization of extremal trajectories in terms of the Jacobi elliptic functions and described the nature of projections of extremal trajectories on $xy$-plane. We used reflection symmetries of the vertical and horizontal subsystem to obtain  the general description of Maxwell strata. The sub-Riemannian problem on $\mathrm{SH(2)}$ and the corresponding results are analogous to the sub-Riemannian problem and the associated results on $\mathrm{SE(2)}$ \cite{max_sre}. The extremal trajectories in both problems are parametrized by same elliptic coordinates $(\varphi,k)$ and the computed Hamiltonian flow is given in terms of Jacobi elliptic functions. Similarly, in both problems the Maxwell sets $Max^{i}$ form a 2-dimensional manifold though for different reflection symmetries $\varepsilon^{i}$. Our ongoing work on the computation of bounds of the first Maxwell time and the first conjugate time in sub-Riemannian problem on $\mathrm{SH(2)}$ shall enable us to draw parallels with the corresponding results on $\mathrm{SE(2)}$ and allow us to explore any symmetry that might exist between the sub-Riemannian problem on $\mathrm{SH(2)}$ and on $\mathrm{SE(2)}$.


		\bibliography{ref}
		\bibliographystyle{unsrt}

\end{document}